\numberwithin{equation}{section}
\newcommand{\beq}{\begin{equation}}
\newcommand{\eeq}{\end{equation}}
\newcommand{\beqs}{\begin{eqnarray*}}
\newcommand{\eeqs}{\end{eqnarray*}}
\newcommand{\beqn}{\begin{eqnarray}}
\newcommand{\eeqn}{\end{eqnarray}}
\newcommand{\beqa}{\begin{array}}
\newcommand{\eeqa}{\end{array}}
\newcommand{\R}{\mathbb R}
\newcommand{\calE}{{\mathcal E}}
\newcommand{\abs}[1]{\left\vert#1\right\vert}
\newcommand{\bh}{{\bar h}}
\newcommand{\e}{\varepsilon}
\newcommand{\p}{\partial}
\newcommand{\Om}{\Omega}
\newcommand{\diam}{\mbox{diam}\,}
\newcommand{\comment}[1]{}
\def\h{\hspace*{.24in}} 
\newtheorem{prop}{Proposition}[section]
\newtheorem{thm}[prop]{Theorem}
\newtheorem{lem}[prop]{Lemma}
\newtheorem{cor}[prop]{Corollary}
\theoremstyle{remark}
\newtheorem{rem}[prop]{Remark}
\newcommand{\dist}{\text{dist}}
\author{Nam Q. Le}
\address{Department of Mathematics, Indiana University, 
Bloomington, IN 47405, USA. }
\email {nqle@iu.edu}
\author{Ovidiu Savin}
\address{Department of Mathematics, Columbia University, New York, NY 10027, USA}
\email{savin@math.columbia.edu}
\thanks{N. Q. L. was supported by NSF grants DMS-2054686 and DMS-2452320.  O. S. was supported by NSF grant DMS-2349794}
\title[Global $C^{1,\beta}$ and $W^{2, p}$ regularity for singular Monge--Amp\`ere]{Global $C^{1,\beta}$ and $W^{2, p}$ regularity for some singular Monge--Amp\`ere equations}
\begin{document}
\subjclass[2020]{35J96, 35J25, 35J75, 35B65}
\keywords{Monge-Amp\`ere equation, singular equations, boundary localization theorem, global H\"older gradient estimate, global second derivative estimate}
\begin{abstract}
We establish global $C^{1,\beta}$ and $W^{2, p}$ regularity for  singular Monge--Amp\`ere equations of the form
\[\det D^2 u \sim \text{dist}^{-\alpha}(\cdot,\partial\Omega),\quad \alpha\in (0, 1),\]
under suitable conditions on the boundary data and domains. 
Our results imply that
the convex Aleksandrov solution to the singular Monge--Amp\`ere equation 
\[\det D^2 u=|u|^{-\alpha}\quad \text{in}\quad\Omega,\quad u=0\quad \text{in}\quad \partial\Omega, \quad \alpha\in (0, 1),\]
where $\Omega$ is a $C^3$, bounded, and uniformly convex domain, 
is globally $C^{1,\beta}$ and belongs to $W^{2, p}$ for all $p<1/\alpha$.

\end{abstract}

\maketitle

\section{Introduction and statement of the main result}
\label{Sect1}
In this paper, we are interested in establishing global H\"older gradient estimates and global Sobolev estimates for the second derivatives of solutions to certain singular Monge-Amp\`ere equations. Before stating our results, we first briefly recall closely related estimates in nondegenerate and degenerate equations.

\smallskip
Global $C^2$ estimates for the nondegenerate Monge--Amp\`ere equation
\begin{equation}
\label{MAf}
\det D^2 u= f \quad \text{in}~\Omega,
\quad 0<\lambda\leq f\leq \Lambda,
\quad u = \varphi\quad \text{ on}~ \p\Omega\end{equation}
were first established the works of Ivo\u{c}kina \cite{I}, Krylov \cite{K}, Caffarelli-Nirenberg-Spruck \cite{CNS} when $f\in C^2(\overline{\Omega})$.
For $f\in C^\alpha(\overline{\Omega})$,
 under sharp conditions
on the boundary data, global $C^{2,\alpha}$ estimates were obtained by Trudinger-Wang \cite{TW1} and the second author \cite{SC2a}. 
For $f\in C(\overline{\Omega})$, the second author \cite{Sw2p} established global $W^{2, p}$ estimates for solutions to \eqref{MAf} under suitable conditions on the boundary data and domain. Without any continuity on $f$, the techniques in \cite{Sw2p} give global $W^{2, 1+\e}$ estimates for solution to \eqref{MAf} where $\e>0$ depends on $n,\lambda$, and $\Lambda$.

\smallskip
When $f$ is only bounded between two positive constants, global $C^{1,\beta}$  estimates for \eqref{MAf} were established by the authors in \cite{LS1} for $C^3$, uniformly convex domains,  and then by the second author and Zhang \cite{SZ1} under optimal boundary conditions when the domain is uniformly convex or has flat boundary. 
Recently, Caffarelli, Tang, and Wang \cite{CTW} established these estimates when the Monge--Amp\`ere measure $f$ is doubling and bounded from above, which allows for degeneracy and the case of zero right-hand side.

\smallskip
For degenerate Monge-Amp\`ere equations of the type
\begin{equation}
\label{MAa}
\det D^2 u \sim \dist^{\alpha}(\cdot,\p\Omega),\quad \alpha>0,
\end{equation}
based on the boundary localization theorem in \cite{SC2}, the authors \cite{LS2} established 
global $C^{2,\beta}(\overline{\Omega})$ estimates for solutions under suitable conditions on the boundary data and domain. In particular, these estimates imply global $C^{2,\beta}(\overline{\Omega})$ regularity for convex solutions to degenerate Monge-Amp\`ere equations
\begin{equation}
\label{MAevp}
\det D^2 u=|u|^\alpha \quad\text{in }\Omega, \quad u=0 \quad\text{on }\p\Omega,\quad \alpha>0,\end{equation}
on $C^3$, uniformly convex domains $\Omega$. The case of $\alpha=n$ and $|u|^\alpha$ being replaced by $\lambda (\Omega) |u|^n$ is the Monge-Amp\`ere eigenvalue problem.

\smallskip
Here, we investigate higher-order regularity for a class of singular Monge-Amp\`ere equations where $|u|^{\alpha}$ in \eqref{MAevp} is replaced by  $|u|^{-\alpha}$. This type of equations has 
a close relation with the 
$L_p$-Minkowski problem (see,  for example \cite{Luk}) and the Minkowski problem in centro-affine geometry (see, for example \cite{CW, JW} and the references therein).
When $\alpha\in (0, 1)$, our main result 
establishes 
global $C^{1,\beta}$ and $W^{2, p}$ regularity. 
\begin{thm}[Global $C^{1,\beta}$ and $W^{2, p}$ regularity for singular Monge-Amp\`ere equations] 
\label{mainthm1}
Let $\alpha\in (0, 1)$ and let $\Omega\subset\R^n$ ($n\geq 2$) be a uniformly convex domain with $C^3$ boundary. Let $u\in C(\overline{\Omega})$ be the Aleksandrov convex solution to
the singular Monge-Amp\`ere equation
\begin{equation}
 \label{MA1}
 \left\{
 \begin{alignedat}{2}
   \det D^{2} u~&= |u|^{-\alpha} \h~&&\text{in} ~\Omega, \\\
u &=0\h~&&\text{on}~\p \Omega.
 \end{alignedat}
 \right.
\end{equation}
Then, the following hold.
\begin{enumerate}
\item[(i)] $u\in C^{1,\beta}(\overline{\Omega})$ for some $\beta=\beta(n,\alpha,\Omega)\in (0, 1)$. 
\item[(ii)] $u\in W^{2, p}(\Omega)$ for all $p<1/\alpha$.
\end{enumerate}
\end{thm}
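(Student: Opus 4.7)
The plan is to reduce Theorem~\ref{mainthm1} to the paper's general global $C^{1,\beta}$ and $W^{2,p}$ regularity theorems for Monge--Amp\`ere equations of the form $\det D^2 u\sim\dist^{-\alpha}(\cdot,\p\Om)$ on $C^3$ uniformly convex domains with zero boundary data. The crucial reduction is the two-sided comparability
\[
|u(x)|\sim\dist(x,\p\Om)\qquad\text{on }\bom,
\]
which upgrades the self-referential right-hand side $|u|^{-\alpha}$ to a function genuinely comparable to $\dist^{-\alpha}$. Existence of the Aleksandrov solution $u$ can be obtained by approximation: solve $\det D^2 u_\e=(|u_\e|+\e)^{-\alpha}$ with $u_\e=0$ on $\p\Om$ for $\e>0$ and pass to the limit using the barriers below for uniform estimates. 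A comparison principle adapted to \eqref{MA1} holds because $u\mapsto|u|^{-\alpha}$ is strictly increasing on $(-\infty,0)$: if $v$ is a convex subsolution and $w$ a convex supersolution with $v=w=0$ on $\p\Om$, then at an interior maximum of $v-w>0$ one obtains $\det D^2 v\le\det D^2 w$ from $D^2(v-w)\le 0$, while $v>w$ (both negative) forces $|v|^{-\alpha}>|w|^{-\alpha}$, contradicting $\det D^2 v\ge|v|^{-\alpha}$ and $\det D^2 w\le|w|^{-\alpha}$.

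The comparability itself will come from two barriers. Let $u_0\in C^{2,\alpha}(\bom)$ be the smooth convex solution of $\det D^2 u_0=1$ in $\Om$ with $u_0=0$ on $\p\Om$; standard global regularity on $C^3$ uniformly convex domains gives $|u_0|\sim\dist(\cdot,\p\Om)$. Take the supersolution $\overline w:=c u_0$ with $c>0$ small enough that $c^{n+\alpha}(\sup|u_0|)^\alpha\le 1$, so that $\det D^2\overline w=c^n\le c^{-\alpha}|u_0|^{-\alpha}=|\overline w|^{-\alpha}$; comparison yields $u\le\overline w$, and hence $|u|\gtrsim\dist(\cdot,\p\Om)$. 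For the matching upper bound a subsolution is needed whose Monge--Amp\`ere measure blows up like $d^{-\alpha}$ at $\p\Om$; elementary ansatzes (multiples of $u_0$, or $-c(-u_0)^\beta$ with $\beta\in(0,1)$) either have bounded Monge--Amp\`ere measure near $\p\Om$ or give only the sub-linear bound $|u|\lesssim\dist^\beta$ with $\beta<1$. The clean way out is to apply the paper's general theorem to the linear-RHS model equation $\det D^2\tilde u=A\,\dist^{-\alpha}$ in $\Om$ with $\tilde u=0$ on $\p\Om$: scaling $\tilde u=A^{1/n}\tilde u_1$ reduces this to $\det D^2\tilde u_1=\dist^{-\alpha}$, for which the general theorem yields $|\tilde u_1|\sim\dist(\cdot,\p\Om)$, whence $|\tilde u|\ge c_1 A^{1/n}\dist(\cdot,\p\Om)$. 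Choosing $A$ so large that $c_1 A^{1/n}\ge A^{-1/\alpha}$ then makes $\tilde u$ a subsolution of \eqref{MA1}, and comparison gives $u\ge\tilde u$, so that $|u|\lesssim\dist(\cdot,\p\Om)$.

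Combining the two barriers, $\det D^2 u=|u|^{-\alpha}\sim\dist(\cdot,\p\Om)^{-\alpha}$ in $\Om$, so $u$ falls within the scope of the paper's general global $C^{1,\beta}$ and $W^{2,p}$ regularity results with zero boundary data on a $C^3$ uniformly convex domain, yielding (i) and (ii); the exponent range $p<1/\alpha$ in (ii) matches the sharp integrability $\dist^{-\alpha p}\in L^1(\Om)\iff\alpha p<1$. I expect the main obstacle to be the upper bound $|u|\lesssim\dist(\cdot,\p\Om)$: because $|u|^{-\alpha}$ blows up precisely where $|u|$ is small, no elementary smooth subsolution captures the sharp $d^{-\alpha}$ rate, and one is essentially forced to use the paper's general result on the linearized model. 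The scheme is not circular, since the general theorem is established independently of Theorem~\ref{mainthm1} and is invoked here only on the constant-coefficient equation $\det D^2\tilde u=A\,\dist^{-\alpha}$.
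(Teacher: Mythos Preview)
Your reduction is correct and matches the paper's strategy: both arguments establish $|u|\sim\dist(\cdot,\p\Omega)$ so that \eqref{MA1} falls under \eqref{singbdr1}, verify the quadratic separation, and then invoke Theorems~\ref{thmb} and~\ref{thmw}. The only substantive difference is in how the upper bound $|u|\le C\,\dist(\cdot,\p\Omega)$ is obtained. The paper simply cites Mohammed~\cite{M}, whose existence result already gives a solution in $C^\infty(\Omega)\cap C^{0,1}(\overline\Omega)$, so Lipschitz continuity (hence the upper bound) is immediate. Your route---solving the auxiliary problem $\det D^2\tilde u_1=\dist^{-\alpha}$, applying the paper's general machinery to get $\tilde u_1\in C^{0,1}(\overline\Omega)$, and using $A^{1/n}\tilde u_1$ as a subsolution---is more self-contained but rather elaborate for what is ultimately a barrier argument; it does, however, avoid an external reference.

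A few points you glossed over: (a) applying Theorem~\ref{thmb} to $\tilde u_1$ (and later to $u$) requires the quadratic separation hypothesis, which the paper notes (in the remark after Theorem~\ref{SZthm}) follows from Proposition~3.2 of~\cite{SC2a} when the boundary data vanish and $\Omega$ is uniformly convex; (b) the lower bound $|\tilde u_1|\ge c\,\dist$ you need for the subsolution property is not a consequence of Theorem~\ref{thmb} itself---it comes from the very same elementary barrier $cu_0$ you already used for $u$, since $\det D^2\tilde u_1\ge c_0>0$; (c) to invoke Theorem~\ref{thmw} for part~(ii) you need $g:=(|u|/\dist)^{-\alpha}\in C(\overline\Omega)$, which uses part~(i), and the paper makes this dependence explicit.
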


\smallskip
For $\alpha\in (0, 1)$, Mohammed \cite[Corollaries 2.2 and 3.4]{M} proved the existence of a convex solution $u\in C^{\infty}(\Omega)\cap C^{0,1}(\overline{\Omega})$ to \eqref{MA1}. 
Thus, there exist positive constants $\lambda$ and $\Lambda$ depending on $n, \alpha$, and $\Omega$ such that
\begin{equation}
\label{singbdr1}
 \lambda [\dist(\cdot,\p\Omega)]^{-\alpha}\le \det D^2 u \le\Lambda [\dist(\cdot,\p\Omega)]^{-\alpha}\quad\mathrm{in}\;\Omega.
\end{equation}
It can be shown using a maximum principle argument that $u$ is unique.  Moreover, $u$ separates quadratically from its tangent hyperplanes on the boundary. This follows from the proof of Proposition 3.2 in \cite{SC2a} where only the lower bound for $\det D^2 u$ is used. This condition clearly follows from \eqref{singbdr1}.

\smallskip
On the other hand, for $\alpha>1$, it can be showed that for the solution $u$ to \eqref{MA1}, $|u|$ is comparable to $[\dist(\cdot,\p\Omega)]^{\frac{n+1}{n+\alpha}}$, so it is only H\"older continuous but not Lipschitz continuous; see Lazer and Mckenna \cite[Theorem 3.1]{LM}.

\smallskip
We say a few words about the proof of Theorem \ref{mainthm1} which follows from general results for singular Monge-Amp\`ere equations satisfying \eqref{singbdr1}; see Theorems \ref{thmb} and \ref{thmw}.
Our main technical tool
is a boundary localization theorem (Theorem \ref{SZthm}) for singular Monge-Amp\`ere equations of the type \eqref{singbdr1}. As in \cite{Sw2p}, Theorem \ref{SZthm} allows us to control the geometry of maximal interior sections of the solution in Proposition \ref{tan_sec0}. From this,  using Caffarelli's interior $C^{1,\beta}$ estimate together with a rescaling argument, we obtain the global $C^{1,\beta}$ estimates (Theorem \ref{thmb}). We use the boundary H\"older gradient estimates to prove a Vitali-type covering lemma (Lemma \ref{V_lem}). We deduce the global $W^{2,p}$ estimates (Theorem \ref{thmw}) from Caffarelli's interior $W^{2,p}$ estimate in combination with a covering argument. 

\smallskip
The paper is organized as follows. In Section \ref{SectBLT}, we recall the Boundary Localization Theorem and use it to study the geometry of sections for singular Monge-Amp\`ere equations. We prove Theorem \ref{mainthm1}(i) in Section \ref{SectC1b}. In Section \ref{SectV}, we prove a Vitali Covering Lemma for sections near the boundary. In the final section, Section \ref{SectW2p}, we prove Theorem \ref{mainthm1}(ii). 
\section{Boundary localization theorem and geometry of sections}
\label{SectBLT}
 Throughout, we assume that $u\in C^{1}(\Omega)\cap C^{0,1}(\overline{\Omega})$ is a convex function in a bounded domain $\Omega$ in $\R^n$. We begin with some notation. We usually write $x=(x', x_n)$ for $x\in\R^n$. 
For $x_0\in\p\Omega$, let $\nu_{x_0}$ be the outer unit normal to $\partial\Omega$ at $x_0$. Choose $\tau_{x_0}\in (\R^n)^{n-1}$ in the supporting hyperplane to $\p\Omega$ at $x_0$ such that $(\tau_{x_0},\nu_{x_0})$ is an orthogonal coordinate frame in $\R^n$.

Define the section of $u$ with center $x_0\in\overline{\Omega}$ and height $h>0$ by
\[S_u(x_0, h):=\{x\in \overline{\Omega}: u(x)< u(x_0) + Du(x_0)\cdot (x-x_0) + h\}.\]
In the above definition and the paper, when $x_0\in\p\Omega$, $Du(x_0)$ is understood as follows: $$ x_{n+1}=u(x_0)+Du(x_0) \cdot (x-x_0) $$ is a supporting 
hyperplane for the graph of $u$ in $\overline{\Omega}$ at $(x_0, u(x_0))$, but for any $\e >0$,
$ x_{n+1}=u(x_0)+(Du(x_0)- \e \nu_{x_0}) \cdot (x-x_0)$
is not a supporting hyperplane.

For  $x\in \Omega$, we denote by $\bar{h}(x)$ the maximal height of all sections of $u$ centered at $x$ and contained in $\Omega$, that is,
$$\bar{h}(x): =\sup\{h> 0: \quad S_{u}(x, h)\subset \Omega\}.$$
In this case, $S_{u}(x, \bar{h}(x))$  is called the {\it maximal interior section of $u$ with center $x\in\Omega$}, and it is tangent to the boundary $\p\Omega$ at some point $z$, that is, 
$\p S_{u}(x, \bar h(x))\cap \p\Omega =\{z\}$.

We fix $\alpha\in (0, 1)$. Let \[\calE:=\{x\in\R^n: |x'|^2 + x_n^{2-\alpha}<1\}.\]
Note that \[B_{1/2}(0)\subset\calE\subset B_1(0).\]
Let us denote 
\[\Omega_h:=\{x\in\Omega: \dist(x,\p\Omega)<h\},\quad 
A_h = (h^{\frac{1}{2}},\ldots, h^{\frac{1}{2}}, h^{\frac{1}{2-\alpha}})\quad\text{for } h>0.\]

We recall the following volume estimates for sections of convex functions (see \cite[Lemmas ~ 5.6 and 5.8]{L}).
\begin{lem}[Volume estimates] 
\label{volsec}
Let $u\in C^{1}(\Omega)\cap C^{0,1}(\overline{\Omega})$ be a convex function in a bounded domain $\Omega$ in $\R^n$. 
\begin{itemize}
\item If $\det D^2 u\leq \Lambda$ in $S_u(x_0, h)\Subset\Omega$, 
then 
\[|S_u(x_0, h)|\geq c(n) \Lambda^{-1/2} h^{n/2}.\]
\item If $\det D^2 u\geq \lambda>0$ in $S_u(x_0, h)$, then 
\[|S_u(x_0, h)|\leq C(n) \lambda^{-1/2} h^{n/2}.\]
\end{itemize}
\end{lem}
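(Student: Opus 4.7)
The idea is to normalize the section via John's lemma and then apply a barrier argument (for the first bullet) together with an interior Lipschitz estimate on the normalized function (for the second bullet).

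By John's lemma, I would choose an affine map $T\colon\R^n\to\R^n$ with $T(0) = x_0$ so that $B_1\subset \tilde S\subset B_n$, where $\tilde S := T^{-1}(S_u(x_0, h))$; in particular, $|S_u(x_0, h)| = |\det T|\cdot|\tilde S|$ is comparable to $|\det T|$ up to $n$-dependent constants. Define the normalized function
\[\tilde u(y) := h^{-1}\bigl[u(Ty) - u(x_0) - Du(x_0)\cdot(Ty - x_0)\bigr],\]
so that $\tilde u$ is convex on $\tilde S$ with $\tilde u(0) = 0$, $\tilde u = 1$ on $\partial\tilde S$, and $\tilde u\le 1$ on $\tilde S$ by the maximum principle for convex functions. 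The affine rescaling gives $\det D^2\tilde u(y) = h^{-n}(\det T)^2\det D^2 u(Ty)$.

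For the first bullet, I would show $B_{1/n}\subset \partial\tilde u(\tilde S)$ via a barrier: for $|p|<1/n$, the convex function $y\mapsto \tilde u(y) - p\cdot y$ equals $0$ at the origin, while on $\partial\tilde S$ it equals $1 - p\cdot y\ge 1-|p|n>0$, so its minimum is attained at an interior point $y_p$ with $p\in \partial\tilde u(y_p)$. Using $\det D^2 u\le \Lambda$,
\[c(n)\le|\partial\tilde u(\tilde S)|=\int_{\tilde S}\det D^2\tilde u\le C(n)\,h^{-n}(\det T)^2\Lambda,\]
giving $|\det T|^2\ge c(n)h^n/\Lambda$ and hence $|S_u(x_0,h)|\ge c(n)\Lambda^{-1/2}h^{n/2}$.

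For the second bullet, the plan is to prove the reverse inclusion $\partial\tilde u(B_{1/2})\subset B_4$. Picking $p_0\in \partial\tilde u(0)$ and using the subgradient inequality at the point $z := p_0/(2|p_0|)\in B_1\subset\tilde S$ yields $1\ge\tilde u(z)\ge |p_0|/2$, so $|p_0|\le 2$; consequently $\tilde u(y)\ge p_0\cdot y\ge -1$ on $B_{1/2}$. Next, for any $y\in B_{1/2}$ and $p\in\partial\tilde u(y)$, the point $z := y + p/(2|p|)$ lies in $B_1\subset\tilde S$ and satisfies $1\ge\tilde u(z)\ge\tilde u(y)+|p|/2\ge -1+|p|/2$, forcing $|p|\le 4$. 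Therefore
\[\int_{B_{1/2}}\det D^2\tilde u = |\partial\tilde u(B_{1/2})|\le |B_4|=C(n).\]
Combining with the pointwise lower bound $\det D^2\tilde u\ge h^{-n}(\det T)^2\lambda$ on $B_{1/2}\subset\tilde S$ (inherited from $\det D^2 u\ge\lambda$ on $S_u(x_0,h)$), this gives $(\det T)^2\le C(n)h^n/\lambda$, hence $|S_u(x_0,h)|\le C(n)\lambda^{-1/2}h^{n/2}$.

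The delicate point is the second bullet: one must restrict to the interior ball $B_{1/2}$ to obtain a uniform upper bound on the Monge--Amp\`ere measure of $\tilde u$, because the gradient of $\tilde u$ can in principle blow up near $\partial\tilde S$ and $|\partial\tilde u(\tilde S)|$ is not bounded a priori by a purely dimensional constant.
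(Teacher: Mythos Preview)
The paper does not prove this lemma; it is simply stated as a recalled standard result. Your argument is the classical one via John normalization and is essentially correct.

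One small inaccuracy: John's lemma produces an ellipsoid centered at the John center of the section, which need not be $x_0$, so you cannot in general arrange $T(0)=x_0$ together with $B_1\subset\tilde S\subset B_n$. (For instance, in one dimension with $u(x)=x^2$ for $x\ge 0$ and $u(x)=Mx^2$ for $x<0$, the point $x_0=0$ lies arbitrarily close to the left endpoint of $S_u(0,h)$ as $M\to\infty$.) The fix is trivial: normalize so that $B_1\subset\tilde S\subset B_n$ and put $y_0:=T^{-1}(x_0)\in B_n$. Since $Du(x_0)$ defines a supporting hyperplane (whether $x_0\in\Omega$ or $x_0\in\partial\Omega$ under the paper's convention), one has $\tilde u\ge 0$ on $\tilde S$ with $\tilde u(y_0)=0$. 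Your barrier argument for the first bullet then gives $B_{1/(2n)}\subset\partial\tilde u(\tilde S)$ (using $|y-y_0|\le 2n$ on $\partial\tilde S$), and the rest is unchanged. For the second bullet, once you know $\tilde u\ge 0$, the $p_0$ detour is unnecessary: for $y\in B_{1/2}$ and $p\in\partial\tilde u(y)$, the point $z=y+p/(2|p|)\in B_1$ already yields $1\ge\tilde u(z)\ge\tilde u(y)+|p|/2\ge |p|/2$, so $|p|\le 2$.
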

\subsection{Boundary Localization Theorem for singular equations} In \cite{SZ2}, the second author and Zhang established
the following boundary localization theorem which is a singular counterpart of the boundary localization theorem in \cite{SC2a}.
\begin{thm}[Boundary Localization Theorem for singular equations]
\label{SZthm}
Let $\Omega$ be a bounded convex domain in $\R^n$ with $C^2$ boundary $\partial\Omega$. Let $\alpha\in (0, 1)$ and $0<\lambda\leq\Lambda$. Assume $u\in C(\overline{\Omega})$ is a convex function satisfying
\begin{equation*}
 \lambda [\emph{dist}(\cdot,\p\Omega)]^{-\alpha}\le \det D^2 u \le\Lambda [\emph{dist}(\cdot,\p\Omega)]^{-\alpha}\quad\mathrm{in}\;\Omega,
\end{equation*}
 and on $\partial\Omega$, $u$ separates quadratically from its tangent hyperplane, namely,  
there exists $\mu>0$ such that for all $ x_0, x\in \p\Omega$, 
\[\mu|x-x_0|^2\le u(x)-u(x_0)-D u(x_0)\cdot(x-x_0)\le\mu^{-1}|x-x_0|^2.\]
Then, there is a constant $c>0$ depending only on $n$, $\lambda$, $\Lambda$, $\alpha$, $\mu$, 
$\mathrm{diam}(\Omega)$, and the  $C^2$ regularity of $\p\Omega$, 
such that for each $x_0\in\partial\Omega$ and $h\le c$, 
\begin{equation*}
\mathcal{E}_{ch}(x_0)\cap\overline{\Omega}\subset S_u(x_0, h)\subset\mathcal{E}_{c^{-1}h}(x_0),
\end{equation*}
where
$$\mathcal{E}_h(x_0):=\big\{x\in\R^n: |(x-x_0)\cdot \tau_{x_0}|^2+|(x-x_0)\cdot\nu_{x_0}|^{2-\alpha}<h\big\}.$$
\end{thm}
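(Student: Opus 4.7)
The plan is to adapt the boundary localization scheme of Savin \cite{SC2a}, replacing the quadratic model solution used in the non-degenerate case with the explicit anisotropic model solution of the singular equation. On the half-space $\{x_n > 0\}$, a direct computation shows that
\[W_{a,b}(x) = \frac{a}{2}|x'|^2 + \frac{b}{(2-\alpha)(1-\alpha)}\, x_n^{2-\alpha}\]
satisfies $D^2 W_{a,b} = \mathrm{diag}(a,\dots,a,\, b\,x_n^{-\alpha})$ and hence $\det D^2 W_{a,b} = a^{n-1}b\cdot x_n^{-\alpha}$, while the sublevel sets $\{W_{a,b}<h\}$ are exactly ellipsoids of the shape $\mathcal{E}_h$ (up to the free parameters $a,b$). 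The theorem therefore amounts to sandwiching $u$ between two such $W_{a,b}$'s near every boundary point.

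After the standard normalization---translating $x_0$ to the origin, rotating so that $\nu_{x_0}=-e_n$, subtracting the supporting hyperplane so that $u(0)=0=Du(0)$, and flattening $\p\Omega$ locally to $\{y_n=0\}$ via a $C^2$ diffeomorphism---the equation becomes $\det D^2\widetilde u = \widetilde f(y)\, y_n^{-\alpha}$ with $\widetilde f$ still bounded between positive constants, and the quadratic separation reads $\mu'|y'|^2\le \widetilde u(y',0)\le (\mu')^{-1}|y'|^2$ for small $|y'|$. I would then realize the two inclusions by explicit barrier comparison. For the upper inclusion $S_u(x_0,h)\subset\mathcal{E}_{c^{-1}h}$, build a subsolution $W^+ = W_{a_+,b_+}(y) - Cy_n$ with $a_+^{n-1}b_+=\Lambda'$ and $a_+\le 2\mu'$, so that $\det D^2 W^+\ge \det D^2 \widetilde u$ and $W^+\le \widetilde u$ on the flat piece of boundary; the Aleksandrov comparison principle applied on a fixed a priori larger ellipsoidal subdomain yields $W^+\le \widetilde u$ inside, whence $\{\widetilde u<h\}\subset \{W^+<h\}\subset \mathcal{E}_{c^{-1}h}$. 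The reverse inclusion $\mathcal{E}_{ch}(x_0)\cap\overline\Omega\subset S_u(x_0,h)$ is obtained symmetrically with a supersolution $W^-=W_{a_-,b_-}$ satisfying $a_-^{n-1}b_-=\lambda'$ and $a_-\ge 2(\mu')^{-1}$, dominating $\widetilde u$ on the relevant boundary piece.

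A conceptually cleaner alternative is a compactness--rescaling scheme. The anisotropic dilation $\widetilde u_h(y) = h^{-1}\widetilde u(A_h y)$ preserves the singular Monge--Amp\`ere structure, so along a sequence of would-be counterexamples $h_k\to 0$ one can extract a limiting convex solution on the half-space of the model equation $\det D^2 u_\infty = c\, y_n^{-\alpha}$ with quadratic boundary data; a Liouville-type uniqueness identifies $u_\infty$ with some $W_{a,b}$, whose sections are exactly ellipsoids of the correct shape, giving a contradiction.

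The main obstacle is reconciling the singular factor $x_n^{-\alpha}$ with the curvature of $\p\Omega$. The flattening diffeomorphism perturbs both the tangential quadratic part and the singular factor $\dist^{-\alpha}(\cdot,\p\Omega)$ by $C^2$-controlled errors, and these must be absorbed by choosing $h\le c$ small depending on the $C^2$ regularity of $\p\Omega$, adding lower-order corrections of size $O(|y'|^{2+\e})$ or $O(y_n^{2-\alpha+\e})$ to the barriers, and using the remaining freedom in $(a_\pm,b_\pm)$. A subtler point is that the normal second derivative $\p_{nn}W_{a,b} = b\, y_n^{-\alpha}$ blows up at the boundary, so the determinant inequality must be verified uniformly up to $y_n=0$; this works precisely because the model matches the equation's scaling, reducing the check to a comparison of dimensional constants. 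For the compactness route, the corresponding hard step is establishing the Liouville property on the half-space---showing that any convex solution of the model equation with quadratic boundary data is necessarily the separable $W_{a,b}$---for which the blow-up of $D^2 u_\infty$ at $y_n=0$ requires care.
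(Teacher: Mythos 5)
The paper does not prove Theorem \ref{SZthm} at all: it is imported verbatim from Savin--Zhang \cite{SZ2}, so there is no internal proof to compare against, and your proposal has to be judged as an independent proof of a known hard theorem. You do identify the correct model: $\det D^2W_{a,b}=a^{n-1}b\,x_n^{-\alpha}$ and the sublevel sets of $W_{a,b}$ have the shape of $\mathcal{E}_h$, and these are indeed the model functions underlying \cite{SZ2}. But two steps of your plan are genuinely flawed. First, ``flattening $\partial\Omega$ to $\{y_n=0\}$ via a $C^2$ diffeomorphism'' after which ``the equation becomes $\det D^2\widetilde u=\widetilde f\,y_n^{-\alpha}$'' is not legitimate: the Monge--Amp\`ere operator is invariant only under affine maps, and under a general change of variables $D^2(u\circ\Phi)=D\Phi^{t}\,(D^2u\circ\Phi)\,D\Phi+\sum_k(\partial_ku\circ\Phi)\,D^2\Phi^k$, so convexity and the structure of the right-hand side are lost and the first-order error is not a small perturbation of the singular weight. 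Both \cite{SC2a} and \cite{SZ2} work with the curved boundary directly; the interaction of the curvature with $\dist^{-\alpha}$ is part of the difficulty, not something a flattening map removes.

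Second, and more fundamentally, a two-sided barrier comparison cannot produce either inclusion with the stated scaling. For the lower inclusion, to apply comparison on a fixed neighborhood you must dominate $u$ on the lateral part of the comparison domain, where only an $O(1)$ bound on $u$ is available; this forces a linear correction $Kx_n$ with $K$ of unit size, and then $\{W_{a,b}+Kx_n<h\}$ reaches only $x_n\lesssim h$ in the normal direction, far short of the required $h^{\frac{1}{2-\alpha}}$. For the upper inclusion, the quadratic separation controls $u$ only \emph{on} $\partial\Omega$, so no single comparison rules out sections that are tilted or elongated in tangential directions inside $\Omega$; controlling this possible shearing is precisely the content of the localization theorems, and in \cite{SC2a,SZ2} it is achieved by an induction on dyadic heights showing that $S_u(x_0,h)$ is equivalent to a sheared ellipsoid whose shearing remains bounded as $h\to0$, with barriers entering only for initial volume and height bounds. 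Your alternative compactness/Liouville route suffers from the same circularity: to extract a limit of the rescalings $h^{-1}u(A_h\cdot)$ you need uniform control of the shape of sections, which is exactly the statement being proved, and the half-space Liouville classification is itself asserted rather than proved. In short, the proposal names the right model but omits the iteration-on-scales mechanism that actually makes the theorem true.
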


\begin{rem} 
One can check the quadratic separation hypotheses  of Theorem \ref{SZthm} in several scenarios concerning the boundary data, such as
\begin{enumerate}
\item when $u|_{\p\Omega}=0$ and $\Omega$ is uniformly convex
\item when $u|_{\p\Omega}\in C^3$, $\p\Omega\in C^3$ and $\Omega$ is uniformly convex.
\end{enumerate}
 In each of these cases, the quadratic separation follows from the proof of Proposition 3.2 in \cite{SC2a} where only the lower bound for $\det D^2 u$ is used. This condition clearly follows from $\det D^2 u\geq \lambda [\dist(\cdot,\p\Omega)]^{-\alpha}$. 
 \end{rem}
 \begin{rem}
 With some more computations, one can also reduce the above $C^3$ regularity of both $u|_{\p\Omega}$ and $\p\Omega$ to $C^{2, 1-\gamma}$ where $0<\gamma< 3\alpha/(4-\alpha)$. We sketch the argument at a point $x_0\in\p\Omega$ as follows. 
 
 By changing coordinates and subtracting an affine function from $u$ and $\varphi=u|_{\p\Omega}$, we can assume that $\Omega\subset\R^n_{+}=\{x\in\R^n: x_n>0\}$, $x_0=0\in\p\Omega$,
$u(0)=0$, and $Du(0)=0$. Then $\varphi\geq 0$. Since $\varphi,\p\Omega\in C^{2, 1-\gamma}$, we find that
\[\varphi(x) = \sum_{i<n} \frac {\mu_i}{2} x_i^2  +  O(|x'|^{3-\gamma}), \quad \mbox{with} \quad \mu_i \ge 0.\]
We need to show that $\mu_i>0$ for all $i=1, \ldots, n-1$. 

Assume $\mu_1=0$. Now, if we restrict to $\p \Omega$ in a small neighborhood near the origin, then for 
all small $h$ the set $\{\varphi<h \}$ contains $ \{|x_1| \le c_1 h^{1/(3-\gamma)}\} \cap \{ |x'| \le c_1 h^{1/2} \} $
for some $c_1>0$.

 Since $S_h=S_u(0, h):=\{x\in\overline{\Omega}: u(x)<h\}$ contains the convex set 
generated by $\{\varphi <h\}$, and $x_n\geq c_2|x'|^2$ in $S_h(0, h)$ because $\Omega$ is uniformly convex, we have
$$|S_u(0, h)| \ge c_2 (c_1 h^{1/(3-\gamma)})^3 c_1^{n-2}h^{(n-2)/2} =c_3 h^{\frac{\gamma}{3-\gamma}}h^{n/2}.$$
Let $x_h^\ast$ be the center of mass of $S_h$ and $d_h:= x_h^\ast \cdot e_n$. From John's lemma, we have \[S_h\subset \{x_n\leq C(n) d_h\}.\] 
Thus, $\det D^2 u\geq \lambda (C(n) d_h)^{-\alpha}$ in $S_h$, and 
Lemma \ref{volsec} gives 
\[|S_h| \le C(n) \Big(\lambda (C(n) d_h)^{-\alpha}\Big)^{-1/2}h^{n/2}= C(n,\lambda, \alpha) d_h^{\alpha/2} h^{n/2}.\] 
On the other hand, we have
\[|S_h| \geq c(n) (c_1 h^{1/2})^{n-1} d_h.\]
The last two estimates on $|S_h|$ implies that 
\[d_h\leq C h^{\frac{1}{2-\alpha}},\]
where $C$ is independent of $h$. It follows that
\[c_3 h^{\frac{\gamma}{3-\gamma}}h^{n/2} \leq |S_h| \leq C(n,\lambda, \alpha) d_h^{\alpha/2} h^{n/2} \leq C h^{\frac{\alpha}{2(2-\alpha)}} h^{\frac{n}{2}}.\]
However,   from  $0<\gamma< 3\alpha/(4-\alpha)$, we have $\frac{\gamma}{3-\gamma} <\frac{\alpha}{2(2-\alpha)}$ which easily gives a contradiction to the preceding inequality as $ h \to 0$. Therefore, we must have $\mu_i>0$ for all $i$.
\end{rem}
\begin{rem} Unless otherwise stated, positive constants depending only on $n$, $\lambda$, $\Lambda$, $\alpha$, $\mu$, and $\Omega$ (via
$\mathrm{diam}(\Omega)$ and the  $C^2$ regularity of $\p\Omega$) are  called {\it universal}.  They are usually denoted by $c, c_\ast, c_0, c_1$, $C, C_0, C_1, C^\ast$, etc., where the lowercase letters indicate small constants and uppercase letters indicate large constants.
\end{rem}

Observe that 
the function
$u$ in Theorem \ref{SZthm} is differentiable at each $x_0\in\p\Omega$ and $Du(x_0)$ is in fact the classical gradient of $u$ at $x_0$.
\begin{prop}[Pointwise $C^{1, 1-\alpha}$ estimates at the boundary]
\label{U-bound-grad2}
Assume that $u$ and $\Omega$ satisfy the hypotheses of  Theorem \ref{SZthm} at a point $x_0 \in \p \Omega$. Then $u$ is differentiable at $x_0$, and for $x\in \overline \Omega \cap B_r(x_0)$ where $r \leq c(n,\lambda,\Lambda,\alpha,\mu, \Omega)$, we have
\begin{equation}
\label{U-bound-grad}
C^{-1}|x-x_0|^2\leq  u(x)-u(x_{0})-Du(x_{0})\cdot (x-x_{0}) \leq C\abs{x-x_{0}}^{2-\alpha},
\end{equation}
where $C=C(n,\lambda,\Lambda,\alpha,\mu, \Omega)$.
Moreover, if $u$ and $\Omega$ satisfy the hypotheses of Theorem \ref{SZthm} also at another point $z_0 \in \p \Omega \cap B_r(x_0)$, then
$$|Du(z_0)-Du(x_0)| \le Cr^{1-\alpha}.$$
\end{prop}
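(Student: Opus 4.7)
The plan is to use Theorem~\ref{SZthm} as a black box: each section $S_u(x_0,h)$ at a boundary point is sandwiched between the ellipsoids $\mathcal{E}_{ch}(x_0)$ and $\mathcal{E}_{c^{-1}h}(x_0)$, whose tangential and normal radii are comparable to $h^{1/2}$ and $h^{1/(2-\alpha)}$, respectively. The two inequalities in \eqref{U-bound-grad} translate directly from these ellipsoid inclusions, differentiability of $u$ at $x_0$ drops out because $2-\alpha>1$, and the H\"older estimate for $Du$ between two boundary points is obtained by comparing supporting hyperplanes on an inner ball provided by uniform convexity.

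For the upper bound, I would take $x\in\overline{\Omega}\cap B_r(x_0)$, decompose $x-x_0=\tau+\nu$ into its tangential and normal components with respect to $\p\Om$ at $x_0$, and note that for $r$ small
\[|\tau|^2+|\nu|^{2-\alpha}\le |x-x_0|^2+|x-x_0|^{2-\alpha}\le 2|x-x_0|^{2-\alpha}.\]
Choosing $h:=(2/c)|x-x_0|^{2-\alpha}$ puts $x$ into $\mathcal{E}_{ch}(x_0)\cap\overline{\Omega}\subset S_u(x_0,h)$ by Theorem~\ref{SZthm}, which yields $u(x)-u(x_0)-Du(x_0)\cdot(x-x_0)<h=C|x-x_0|^{2-\alpha}$. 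For the lower bound, set $s:=u(x)-u(x_0)-Du(x_0)\cdot(x-x_0)\ge 0$. For $|x-x_0|$ small, the upper bound forces $s\le c_0$, so $x$ lies in $S_u(x_0,s')$ for every $s'>s$; Theorem~\ref{SZthm} then gives $x\in\mathcal{E}_{c^{-1}s'}(x_0)$, whose tangential and normal radii are both bounded by $C(s')^{1/2}$ when $s'$ is small, yielding $|x-x_0|^2\le Cs'$. Letting $s'\to s$ gives the lower bound. Differentiability at $x_0$ is immediate, since the convexity of $u$ and the upper bound give $0\le u(x)-u(x_0)-Du(x_0)\cdot(x-x_0)=O(|x-x_0|^{2-\alpha})=o(|x-x_0|)$.

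For the pointwise $C^{1,1-\alpha}$ estimate between the two boundary points, set $p:=Du(x_0)$ and $q:=Du(z_0)$. Combining the upper bound at $z_0$ with the supporting hyperplane inequality $u(\cdot)\ge u(x_0)+p\cdot(\cdot-x_0)$, and using the supporting hyperplane at $z_0$ applied to $x_0$ to dispose of the zero-order terms, I would derive
\[(p-q)\cdot(x-x_0)\le C|x-z_0|^{2-\alpha}\le Cr^{2-\alpha}\qquad\text{for all }x\in\overline{\Omega}\cap B_r(x_0),\]
and symmetrically $(q-p)\cdot(x-z_0)\le Cr^{2-\alpha}$ for $x\in\overline{\Omega}\cap B_r(z_0)$. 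Uniform convexity and $C^2$ regularity of $\p\Om$ ensure, for small $r$, an inner ball $B_{c_0 r}(x_0-c_0 r\nu_{x_0})\subset\overline{\Omega}\cap B_r(x_0)$; taking the supremum of the first inequality over this ball gives $c_0\bigl[|p-q|-(p-q)\cdot\nu_{x_0}\bigr]\le Cr^{1-\alpha}$, and the same computation at $z_0$ gives $c_0\bigl[|p-q|+(p-q)\cdot\nu_{z_0}\bigr]\le Cr^{1-\alpha}$. Adding the two inequalities and using $|\nu_{z_0}-\nu_{x_0}|\le Cr$ from the $C^2$ regularity of $\p\Om$ to absorb the resulting cross term into the left-hand side for $r$ small, I obtain $|p-q|\le Cr^{1-\alpha}$.

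The main obstacle I anticipate is this last step: because the boundary ``gradient'' $Du(x_0)$ is defined as the supporting slope with minimal outward component, the difference $p-q$ may have a nontrivial outward normal component at either endpoint, so a one-sided linear inequality on a half-ball need not bound $|p-q|$ by itself. One genuinely needs to combine both inequalities (at $x_0$ and at $z_0$) together with the $C^2$ proximity of $\nu_{x_0}$ and $\nu_{z_0}$ to close the estimate; the bookkeeping of which constants depend on the uniform-convexity and $C^2$-regularity moduli also requires some care.
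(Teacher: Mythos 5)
Your proof is correct, and the first half (the two-sided bound \eqref{U-bound-grad} and differentiability at $x_0$) is essentially the paper's argument: both read off the upper bound from the inclusion $\mathcal{E}_{ch}(x_0)\cap\overline\Omega\subset S_u(x_0,h)$ with $h\sim|x-x_0|^{2-\alpha}$, and the lower bound from $S_u(x_0,h)\subset\mathcal{E}_{c^{-1}h}(x_0)\subset B_{Ch^{1/2}}(x_0)$. Where you genuinely diverge is the gradient comparison. The paper normalizes $x_0=0$, $u(0)=0$, $Du(0)=0$, uses \eqref{U-bound-grad} at both boundary points to get a \emph{two-sided} bound $|Du(z_0)\cdot x|\le C_0 r^{2-\alpha}$ for all $x$ in a single interior ball $\overline{B_{cr}(y)}\subset\Omega\cap B_r(x_0)\cap B_r(z_0)$, and then evaluates at two points of that ball separated by $cr$ in the direction $Du(z_0)/|Du(z_0)|$; this sidesteps any discussion of normal components. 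You instead derive one-sided inequalities $(Du(x_0)-Du(z_0))\cdot(x-x_0)\le Cr^{2-\alpha}$ and its mirror image, test each on an interior ball tangent at the respective boundary point, and close by adding the two resulting bounds on $|p-q|\mp(p-q)\cdot\nu$ and absorbing the cross term via $|\nu_{x_0}-\nu_{z_0}|\le Cr$. Your route needs only the upper bound in \eqref{U-bound-grad} plus supporting hyperplanes (not the quadratic lower bound), at the price of invoking the $C^2$ continuity of the normal; the paper's route needs a ball in the intersection of the two neighborhoods but no information about $\nu$. One small correction: the interior tangent balls you use follow from the $C^2$ (indeed $C^{1,1}$) regularity of $\p\Omega$ alone; uniform convexity is not among the hypotheses of Theorem \ref{SZthm} and is not needed for this step. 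You also correctly identified the genuine subtlety, namely that a one-sided inequality on a half-neighborhood cannot by itself control $|Du(x_0)-Du(z_0)|$, and your two-point summation handles it.
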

\begin{proof}
We can assume that $\Omega\subset\R^n_{+}=\{x\in\R^n: x_n>0\}$, $x_0=0\in\p\Omega$,
$u(0)=0$, and $Du(0)=0$. Note that  $u\geq 0$ in $\Omega$.
For $h \le c_0 (n,\lambda,\Lambda,\alpha,\mu, \Omega)$, Theorem \ref{SZthm} asserts that
$$ \calE_{c_0h} \cap \bar \Omega  \subset S_u(0, h) \subset \calE_{c_0^{-1}h}\cap \bar \Omega,\quad \text{where } \calE_h:=\{x\in\R^n: |x'|^2 + x_n^{2-\alpha}<h\}= A_h\calE.$$ 
It follows that, for $c$ and $C $ depending only on $\Omega$, $\alpha,\mu $, $\lambda$, $\Lambda$, and $n$, we have
\begin{equation}\label{small-sec0}
 \overline \Omega \cap B_{ch^{\frac{1}{2-\alpha}}}(0)\subset S_{u}(0, h)   \subset
  \overline \Omega\cap B_{C h^{1/2}}(0).
\end{equation}
The first inclusion of \eqref{small-sec0} gives  $|u| \le h$ in $\overline \Omega \cap B_{ch^{\frac{1}{2-\alpha}}}(0)$.
Thus, for all $x$ close to the origin 
\begin{equation}
\label{ubdlog}
|u(x)| \le C |x|^{2-\alpha},
\end{equation}
which shows that $u$ is differentiable 
at $0$. 
The other inclusion of \eqref{small-sec0} gives a lower bound for $u$ near the origin
\[
u(x) \ge C^{-1} |x|^2.
\]
Therefore, \eqref{U-bound-grad} is proved.

Suppose now the hypotheses of Theorem \ref{SZthm} are satisfied at $z_0 \in \p \Omega \cap B_r(0)$. We need to show
 that for $\hat z:= Du(z_0)$, $$|\hat z|=|Du(z_0)|\leq C r^{1-\alpha}.$$ For this, we use \eqref{U-bound-grad} for $z_0$ at $0$, and for $0$ and $z_0$ at all points $x$ in a ball $$B:=\overline{B_{c(n,\Omega)r}(y)} \subset \Omega\cap B_r(0)\cap B_{r}(z_0).$$ 
For $x\in B$, we have
\begin{eqnarray*}
Du(z_0)\cdot x& \leq& u(x)+ [- u(z_0) + Du(z_0)\cdot z_0]-C^{-1}|x-z_0|^2\\&\leq&
C|x|^{2-\alpha} + [C|z_0|^{2-\alpha}- u(0)]\\&\leq& 2Cr^{2-\alpha}.
\end{eqnarray*}
The lower bound for $Du(z_0)\cdot x$ is obtained similarly, and we have \[|\hat z\cdot x|=|Du(z_0)\cdot x|\leq C_0r^{2-\alpha}\quad\text{for all }x\in B.\]
We use the above inequality at $y$ and $\hat y:=y + cr \hat z/|\hat z|$ (if $\hat z\neq 0$) to get
\[cr|\hat z|= \hat z \cdot \hat y- \hat z\cdot y\leq 2C_0r^{2-\alpha}.\]
Therefore $|\hat z|\leq (2C_0/c) r^{1-\alpha}$, as desired.
\end{proof}

\subsection{Geometry of maximal interior sections}
Below, we
summarize key geometric properties of maximal interior sections.
\begin{prop}[Shape of maximal interior sections] 
\label{tan_sec0}
Let $u$ and $\Omega$ satisfy the hypotheses of Theorem \ref{SZthm}. 
Assume that for some $y \in \Omega$, the maximal interior section $S_{u}(y, \bar h(y)) \subset \Omega$
is tangent to $\p \Omega$ at $x_0$.
 If  $h:=\bar h(y) \leq c_\ast$ where $c_\ast$ is a small universal constant,  then 
there exists a small 
 positive universal constant $\kappa_0$ such that
 \begin{equation*}
   \left\{\begin{alignedat}{1}
   &Du(y)-Du(x_0)= -a \nu_{x_0}
\quad \mbox{for some} \quad   a \in [\kappa_0 h^{\frac{1-\alpha}{2-\alpha}}, \kappa_0^{-1} h^{\frac{1-\alpha}{2-\alpha}}],\\
&\kappa_0 \calE_h(x_0) \subset S_{u}(y, h) -y\subset \kappa_0^{-1} \calE_h(x_0), \,\text{and}\\
& \kappa_0 h^{\frac{1}{2-\alpha}} \le \emph{dist}(z,\p \Omega) \le \kappa_0^{-1} h^{\frac{1}{2-\alpha}}\quad\text{for all } z\in S_u(y, 3h/4). &&
   \end{alignedat}\right.
   \end{equation*}
\end{prop}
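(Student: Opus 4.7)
The plan is to reduce to a setting where Theorem \ref{SZthm} applies directly at the tangent point $x_0$, and then to transfer the ellipsoidal shape from $S_u(x_0,h)$ to $S_u(y,h)$ through a comparison argument. After a rigid motion I would arrange that $x_0=0$ and $\Omega$ lies locally in $\{x_n>0\}$ with tangent hyperplane $\{x_n=0\}$ at $0$; subtracting the tangent affine function of $u$ at $x_0$ then lets us further assume $u(0)=0$ and $Du(0)=0$. The hypotheses of Theorem \ref{SZthm} are preserved and applying it at $0$ gives
\[
\mathcal{E}_{ch}(0)\cap\overline{\Omega}\ \subset\ S_u(0,h)\ \subset\ \mathcal{E}_{c^{-1}h}(0)\qquad\text{for all }h\le c_0.
\]

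Next I would pin down the direction of $Du(y)$. Set $\tilde\ell(x):=u(y)+Du(y)\cdot(x-y)$ and $h:=\bar h(y)$. The convex function $u-\tilde\ell$ is nonnegative on $S_u(y,h)$ and equals $h$ at the tangent point $0\in\partial S_u(y,h)\cap\partial\Omega$. Since $S_u(y,h)\subset\{x_n>0\}$ touches $\{x_n=0\}$ at $0$, its supporting hyperplane there must be horizontal, forcing $D(u-\tilde\ell)(0)=-Du(y)$ to be a nonpositive multiple of $e_n$. Hence $Du(y)=a\,e_n$ for some $a\ge 0$, and the identity $0=u(0)=u(y)-ay_n+h$ gives $ay_n=u(y)+h\ge h>0$.

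The heart of the argument is the two-sided shape estimate
\[
\kappa_0\,\mathcal{E}_h(x_0)\ \subset\ S_u(y,h)-y\ \subset\ \kappa_0^{-1}\,\mathcal{E}_h(x_0).
\]
To obtain this I would compare $S_u(y,h)$ with the boundary section $S_u(0,Ch)$ already controlled by Theorem \ref{SZthm}. The maximality of $\bar h(y)$ together with the tangency at $0$ forces $S_u(y,h)\subset S_u(0,C_1 h)\subset\mathcal{E}_{C_2h}(x_0)$, which is the upper inclusion; the matching lower inclusion follows from a barrier or dilation comparison with $S_u(0,c_1h)$ combined with the quadratic separation of $u$ from its tangent planes on $\partial\Omega$. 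From the shape estimate one reads off $y_n\sim h^{1/(2-\alpha)}$; combining this with $ay_n=u(y)+h$ and $0\le u(y)\le Ch$ (from $y\in S_u(0,C_1h)$) yields $a\sim h^{(1-\alpha)/(2-\alpha)}$. Applying the shape estimate at height $3h/4$ gives $z-y\in\kappa_0^{-1}\mathcal{E}_{3h/4}(x_0)$ for every $z\in S_u(y,3h/4)$, and the uniform convexity of $\Omega$ then forces $\dist(z,\partial\Omega)\sim z_n\sim h^{1/(2-\alpha)}$.

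The main obstacle I anticipate is the lower inclusion in the shape estimate: the natural strategy would be an interior Caffarelli-type rescaling by $A_h=\mathrm{diag}(h^{1/2},\dots,h^{1/2},h^{1/(2-\alpha)})$, but since $S_u(y,h)$ touches $\partial\Omega$ at $0$ the singular weight $d^{-\alpha}$ blows up there and $\det D^2 u$ is not bounded on $S_u(y,h)$, so the standard interior normalization does not apply directly. Resolving this requires either a delicate engulfing argument that isolates the singularity at the tangent point and invokes Theorem \ref{SZthm} iteratively, or a barrier comparison exploiting the ellipsoidal control of $S_u(0,ch)$ together with the convexity of $u-\tilde\ell$. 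Threading the singular weight $d^{-\alpha}$ through every dilation is the essential technical point distinguishing this argument from its non-degenerate analogue in \cite{Sw2p}.
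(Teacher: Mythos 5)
Your normalization and the identification $Du(y)=ae_n$ with $a\,y_n=u(y)+h\ge h$ coincide with the paper's opening, but the core of the proposition is left unproved. You yourself flag the lower inclusion $\kappa_0\calE_h(x_0)\subset S_u(y,h)-y$ as ``the main obstacle'' and only list candidate strategies (an engulfing argument, a barrier comparison) without carrying either out; that is exactly the step that has to be executed. Moreover, even the upper inclusion is not a consequence of maximality and tangency alone: in your normalization $S_u(y,h)=\{x\in\overline\Omega: u(x)<a x_n\}$, so $S_u(y,h)\subset S_u(0,Ch)$ amounts to the product bound $a\cdot\sup_{S_u(y,h)}x_n\le Ch$, i.e.\ to quantitative upper bounds on $a$ and on the height of the section. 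In the paper these bounds occupy Steps 1--5: comparison with the sets $S_t'=\{u<t x_n\}$, the auxiliary function $u-c_1h^{\frac{1-\alpha}{2-\alpha}}x_n$, the volume estimates of Lemma \ref{volsec}, and John's lemma. None of this is supplied or replaced in your proposal.

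The paper resolves the difficulty you identify by reversing your logical order. It first proves, by the volume and John's-lemma arguments above, that $a\ge c_1 h^{\frac{1-\alpha}{2-\alpha}}$, $a\le C_1h^{\frac{1-\alpha}{2-\alpha}}$, and that every $z\in S_u(y,3h/4)=\{u<ax_n-h/4\}$ satisfies $z_n>h/(4a)\ge c_0h^{\frac{1}{2-\alpha}}$ (using $u\ge0$), hence $\dist(z,\p\Omega)\sim h^{\frac{1}{2-\alpha}}$ there; only with these distance bounds in hand is $\det D^2u$ comparable to $h^{-\frac{\alpha}{2-\alpha}}$ on $S_u(y,3h/4)$, so that the rescaling by $A_h$ yields a Monge--Amp\`ere measure pinched between universal constants, and the lower inclusion then follows from the Aleksandrov maximum principle together with comparability of volumes. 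Your plan to go the other way --- first get the shape estimate, then read off the distance estimates --- also breaks at the last step: the inclusion $S_u(y,3h/4)-y\subset\kappa_0^{-1}\calE_{3h/4}(x_0)$ only bounds $|z_n-y_n|$ by $\kappa_0^{-1}(3h/4)^{\frac{1}{2-\alpha}}$, which is of the same order as $y_n$ itself, so it yields no positive lower bound on $z_n$; the lower bound has to come from the explicit description of $S_u(y,3h/4)$ and the upper bound on $a$, as in the paper's Step 5. (A minor additional point: the hypotheses are those of Theorem \ref{SZthm} --- $C^2$ boundary plus quadratic separation --- so ``uniform convexity of $\Omega$'' should not be invoked as such.)
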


\begin{proof} For simplicity, we can assume $\Omega\subset\R^n_{+}=\{x\in\R^n: x_n>0\}$, $x_0=0\in\p\Omega$,
$u(0)=0$, and $Du(0)=0$. Note that $\nu_{x_0}=-e_n$, and $u\geq 0$ in $\Omega$. Denote $\calE_h= \calE_h(0)$. Consider $c_\ast$ to be not greater than the constant $c$ in Theorem \ref{SZthm}. Assume $h=\bar h(y)\leq c_\ast$.

Because the section $$S_u(y,h)=\{x\in\overline{\Omega}: u(x) < u(y) + Du(y)\cdot (x-y) + h\}\subset \Omega$$
is tangent to $\p \Omega$ at $0$, we must have
$$u(0)= u(y) + Du(y)\cdot (0-y) + h,\quad
\text{and } Du(0) - Du(y) = -a e_n$$
for some $a\in\R$. Since $u(0)=0$ and $Du(0) =0$, we have 
$$Du(y) = a e_n,\quad u(y) + h= a y_n, \quad\text{and }S_u(y, h)=\{x\in \overline{\Omega}: u(x) < a x_n\}.$$
The same arguments show that
\begin{equation}\label{section-expression}S_u(y, t) =\{x\in \overline{\Omega}: u(x) < a x_n + t-h\}\quad \text{for all } t>0. 
\end{equation}
For $t> 0$, we denote
$$S_t':=\{ x \in \overline \Omega: u(x)<tx_n\} ,$$
and clearly $S_{t_1}'\subset S_{t_2}'$ if $t_1 \le t_2$.

 \smallskip
\noindent
{\bf Step 1.} We show that 
\begin{equation}\label{f_sub}
S_{c_1 h^{\frac{1-\alpha}{2-\alpha}}}' \subset S_u(0, h) \cap \Omega\quad \text{for } c_1:= c^{\frac{1}{2-\alpha}}.
\end{equation}
Indeed, if (\ref{f_sub}) does not hold, then from $u(0)=0$ and the convexity of $u$, we can find \[x\in S_{c_1 h^{\frac{1-\alpha}{2-\alpha}}}'\cap \p S_u(0, h)\cap\Omega.\] 
By Theorem \ref{SZthm}, $x\in \overline{\calE_{c^{-1}h}}$.
Thus $u(x)=h$ and $x_n^{2-\alpha}\leq c^{-1} h$, so
\[x_n \leq (c^{-1} h)^{\frac{1}{2-\alpha}}.\]
With $c_1$ defined as above, 
we now have 
\[
h= u(x) < c_1 h^{\frac{1-\alpha}{2-\alpha}} x_n\leq c_1 h^{\frac{1-\alpha}{2-\alpha}} (c^{-1} h)^{\frac{1}{2-\alpha}}=h,
\]
which is a contradiction.   Hence, (\ref{f_sub}) holds. 

 \smallskip
\noindent
{\bf Step 2.} We next show that
\[a\geq c_1 h^{\frac{1-\alpha}{2-\alpha}}. \]
 If this is not true, then $a< c_1 h^{\frac{1-\alpha}{2-\alpha}}$, so
 \[y_n=\frac{u(y) + h}{a}\geq \frac{h}{a} >c^{-1}_1 h^{\frac{1}{2-\alpha}}.\]
In view of Step 1, we have $y\in S'_a \subset S_{c_1 h^{\frac{1-\alpha}{2-\alpha}}}'\subset S_u(0, h) \cap \Omega$. Hence, Theorem \ref{SZthm} gives
\[y_n \leq (c^{-1} h)^{\frac{1}{2-\alpha}} =c^{-1}_1 h^{\frac{1}{2-\alpha}},\]
which contradicts the preceding estimate.

 \smallskip
\noindent
{\bf Step 3.}  We show that $S_{c_1 h^{\frac{1-\alpha}{2-\alpha}}}'$ has volume comparable to that of $S_u(0, h)$. 

By \eqref{f_sub}, we only need to prove the lower bound. 
Let
\[\tilde u(x):= u(x)- c_1 h^{\frac{1-\alpha}{2-\alpha}} x_n,\quad\text{and}\quad\theta:= c^{\frac{2}{1-\alpha}}/2^{\frac{2-\alpha}{1-\alpha}}.\]
From Theorem \ref{SZthm}, there exists $\bar x\in S_u(0, \theta h)$ such that $\bar x_n\geq (c\theta h)^{\frac{1}{2-\alpha}}$.  Note that
\[\tilde u (\bar x) \leq \theta h - c_1 h^{\frac{1-\alpha}{2-\alpha}} (c\theta h)^{\frac{1}{2-\alpha}} =-\theta h.\]
Let $x_0$ be the minimum point of $\tilde u$ in $\overline{\Omega}$. Then \[x_0\in\Omega, \quad \tilde u(x_0)\leq -\theta h,\quad \text{and } 
S_{c_1 h^{\frac{1-\alpha}{2-\alpha}}}' = S_{\tilde u} (x_0, -\tilde u(x_0)).\]
Let us consider $z\in\Omega$ with

\[\tilde u (z)  \leq -\theta h/2.\]
Then from 
\[ -\theta h/2 \geq -c_1  h^{\frac{1-\alpha}{2-\alpha}} z_n,\]
we find
\[z_n\geq \tilde c h^{\frac{1}{2-\alpha}},\quad \tilde c:= \theta/(2c_1).\]

We prove that for some universal constant $\hat c$,
\begin{equation}
\label{distx1}
\hat c h^{\frac{1}{2-\alpha}} \leq \dist(z,\p\Omega) \leq (c^{-1 } h)^{\frac{1}{2-\alpha}}. \end{equation}
Indeed, since $z\in S_{c_1 h^{\frac{1-\alpha}{2-\alpha}}}'$, we deduce from Step 1 and Theorem \ref{SZthm} that
\[|z'|\leq (c^{-1} h)^{\frac{1}{2}},\quad z_n\leq  (c^{-1 } h)^{\frac{1}{2-\alpha}}.\]
Thus, the second inequality in \eqref{distx1} is obvious. 
For the first inequality, observe that
\[\p\Omega\cap B_{4c^{-1}h}(0)= \{(x', g(x'))\},\]
 where 
 \[0\leq g(x')\leq C|x'|^2 \leq C c^{-1} h,\]
 for some universal constant $C$. 
 We assert that
\begin{equation}
\label{distx2} \dist(z,\p\Omega) \leq z_n -g(z') \leq 2  \dist(z,\p\Omega).
\end{equation}
Indeed, let $\bar z\in\p\Omega$ be such that $\dist(z,\p\Omega)=|z-\bar z|$. Then
\[|\bar z| \leq |\bar z- z| + |z|\leq 2|z|.\]
It follows that
\[\dist(z,\p\Omega)\geq z_n-g(\bar z')\geq z_n -C|z'|^2 \geq z_n -4C|z|^2 \geq z_n/2,\]
if $h$ is small.  The  second inequality in \eqref{distx2} follows. 

We have
\[z_n-g(z') \geq \tilde c h^{\frac{1}{2-\alpha}} - Cc^{-1} h\geq  \frac{\tilde c}{2} h^{\frac{1}{2-\alpha}},\]
provided that
\[h\leq h_0,\]
where $h_0$ is small, universal.

It follows from \eqref{distx2} that 
\[ \dist(z,\p\Omega) \geq  \frac{\tilde c}{4} h^{\frac{1}{2-\alpha}}\equiv \hat ch^{\frac{1}{2-\alpha}}. \]
Since $z$ is arbitrary, we deduce from \eqref{distx2} that
\[\det D^2 u \leq \Lambda \dist^{-\alpha}(\cdot,\p\Omega) \leq \Lambda (\hat ch^{\frac{1}{2-\alpha}})^{-\alpha}\quad\text{in } S_{\tilde u} (x_0, -\tilde u(x_0)-\theta h/2).\]
Thus, the volume estimate in Lemma \ref{volsec}(i) gives
\begin{eqnarray*}
|S_{c_1 h^{\frac{1-\alpha}{2-\alpha}}}'| &\geq& |S_{\tilde u} (x_0, -\tilde u(x_0)-\theta h/2)| \\
&\geq & c(n) \big[ \Lambda (\hat ch^{\frac{1}{2-\alpha}})^{-\alpha}\big]^{-1/2} |-\tilde u(x_0)-\theta h/2|^{n/2}\\
&\geq& c' h^{n/2} h^{\frac{\alpha}{2(2-\alpha)}}=c' h^{\frac{n-1}{2}} h^{\frac{1}{2-\alpha}} \\
&\geq& c'' |S_u(0, h)|,
\end{eqnarray*}
where $c'$ and $c''$ are universal.
This proves Step 3.

 \smallskip
\noindent
{\bf Step 4.} We show that for some universal constant $C^\ast$, \[d_h:=\sup_{S_u(y, h)} x_n\leq C^\ast h^{\frac{1}{2-\alpha}} \quad\text{and}\quad |S_u(y, h)| \leq C^\ast |S_u(0, h)|.\] 
By John's lemma, there is an ellipsoid $E$ with center $b$ such that 
\[E-b\subset S_{c_1 h^{\frac{1-\alpha}{2-\alpha}}}' -b\subset n(E-b).\]
In view of Step 3, \[|E| \geq n^{-n}|S_{c_1 h^{\frac{1-\alpha}{2-\alpha}}}'|\geq  c_2 h^{\frac{n-1}{2}} h^{\frac{1}{2-\alpha}},\quad \text{and}\quad
S_{c_1 h^{\frac{1-\alpha}{2-\alpha}}}' \subset S_u(0, h) \subset\{0\leq x_n\leq (c^{-1}h)^{\frac{1}{2-\alpha}}\}.\]
Thus, 
\[\mathcal{H}^{n-1} (E\cap\{x_n= b_n\})\geq c_2 h^{\frac{n-1}{2}},\]
where $c_2$ is universal. 
It follows that
\[|S_u(y, h)| \geq n^{-1} d_h \mathcal{H}^{n-1} (E\cap\{x_n= b_n\})
\geq n^{-1}c_2 d_hh^{\frac{n-1}{2}}.  \]
On the other hand, 
\[\det D^2 u\geq \lambda \dist^{-\alpha} (\cdot,\p\Omega) \geq \lambda d_h^{-\alpha}\quad \text{in } S_u(y, h).\]
By the volume estimate in Lemma \ref{volsec}(ii), we have
\[|S_u(y, h)| \leq C(n) ( \lambda h_h^{-\alpha})^{-1/2} h^{n/2} = C(n)\lambda^{-1/2} h^{n/2} d_h^{\alpha/2}.\]
Consequently,
\[n^{-1}c_2 d_hh^{\frac{n-1}{2}} \leq C(n)\lambda^{-1/2} h^{n/2} d_h^{\alpha/2}.\]
This gives the upper bound for $d_h$ and the inequality for $S_u(y, h)$ as asserted in Step 4.

 \smallskip
\noindent
{\bf Step 5.} If $z\in S_u(y, 3h/4)$, then 
\begin{equation} 
\label{z3h4}
c' h^{\frac{1}{2-\alpha}}\leq \dist(z,\p\Omega) \leq (c^{-1} h)^{\frac{1}{2-\alpha}}.\end{equation}
Indeed, it follows from Steps 3 and 4 that
\[|S'_a| = |S_u(y, h)|  \leq |S'_{C_1 h^{\frac{1-\alpha}{2-\alpha}}}|,\]
so
\[a\leq C_1 h^{\frac{1-\alpha}{2-\alpha}}\quad\text{and}\quad S_u(y, h)\subset S'_{C_1 h^{\frac{1-\alpha}{2-\alpha}}}.\]
Therefore
 \[y_n=\frac{u(y) + h}{a}\geq \frac{h}{a} >C^{-1}_1 h^{\frac{1}{2-\alpha}}.\]
 From $y_n\leq d_h$ and arguing as in Step 3, we also obtain
 \[C^\ast h^{\frac{1}{2-\alpha}}\geq d_h\geq y_n \geq \dist(y,\p\Omega) \geq \bar c  h^{\frac{1}{2-\alpha}}.\]
 Due to Step 4, we only need to prove the lower bound in \eqref{z3h4}. Note that
\[S_u(y, 3h/4)=\{x\in\overline{\Omega}: u(x)< ax_n -h/4\}.
\]
Thus, for $z\in S_u(y, 3h/4)$, we deduce from Step 2 that
\[z_n\geq \frac{h}{4a}\geq c_0 h^{\frac{1}{2-\alpha}}.\]
The rest of the  proof is similar to Step 3.

 \smallskip
\noindent
{\bf Step 6.} We prove that, for some universal constant $C$, 
\begin{equation} 
\label{SuE}
C^{-1} \calE_h \subset S_u(y, h)-y\subset 2C \calE_h.
\end{equation}
Clearly,
 \[S_u(y, h)\subset S_u(0, Ch)\subset C \calE_h.\]
 Hence
  \[S_u(y, h)-y\subset 2C \calE_h\equiv 2CA_h \calE.\]
  
  \smallskip
\noindent 
{\bf Rescaling.} Consider the rescaling $u_h$ of $u$ given by
\begin{equation*}
\label{uh_res}
u_h( x):=h^{-1} [u( y+A_h x)-u(y)-Du(y)\cdot (A_h x)-h],\quad x\in A^{-1}_h (\Omega-y).
\end{equation*}
Then 
\[\det D^2 u_h (x) = h^{-n} (\det A_h)^2 \det D^2 u(y+ A_h x)=  h^{\frac{\alpha}{2-\alpha}} \det D^2 u(y+ A_h x).\]
Note that $x\in S_{u_h}(0, 3/4)$ if and only if $y+ A_h x \in S_u(y, 3h/4)$. Thus, by the distance estimates in Step 5, we can find universal constants $\lambda_0,\Lambda_0$ such that 
\begin{equation}
\label{uh_res}
\lambda_0 \leq \det D^2 u_h\leq \Lambda_0\quad\text{in } S_{u_h}(0, 3/4).
\end{equation}
By the Aleksandrov maximum principle, we have
\[1/4^n = |u_h(0) + 3/4|^n \leq C(n) \dist (0, \p S_{u_h}(0, 3/4)) [\diam (S_{u_h}(0, 3/4))]^{n-1} \Lambda_0 |S_{u_h}(0, 3/4)|.\]
Since $S_{u_h}(0, 1):=A_h^{-1} (S_u(y, h)-y)\subset 2C\calE$, we find a universal constant $c$ such that
 \[\dist (0, \p S_{u_h}(0, 3/4)) \geq c.\]
 Because
   the convex sets $S_{u_h}(0, 3/4)\subset 2C\calE$
have comparable volumes,  $S_{u_h}(0, 3/4)$ must contain $\kappa\calE$ for $\kappa$ universal. 
It follows that \eqref{SuE} holds.

The proposition is proved. 
 \end{proof}

\section{H\"older estimates for the gradient}
\label{SectC1b}
In this section, we prove global H\"older estimates for the gradient of the solution to \eqref{MA1}. Clearly, Theorem \ref{mainthm1} (i) is a consequence of the following result.
\begin{thm}[Global $C^{1,\beta}$ regularity for singular Monge-Amp\`ere equations]
\label{thmb}
Let $\Omega$ be a bounded convex domain in $\R^n$ with $C^2$ boundary $\partial\Omega$. Let $\alpha\in (0, 1)$ and $0<\lambda\leq\Lambda$. Assume $u\in C(\overline{\Omega})$ is a convex function satisfying
\begin{equation*}
 \lambda [\emph{dist}(\cdot,\p\Omega)]^{-\alpha}\le \det D^2 u \le\Lambda [\emph{dist}(\cdot,\p\Omega)]^{-\alpha}\quad\mathrm{in}\;\Omega,
\end{equation*}
 and on $\partial\Omega$, $u$ separates quadratically from its tangent hyperplane, namely,  
there exists $\mu>0$ such that for all $ x_0, x\in \p\Omega$, 
\[\mu|x-x_0|^2\le u(x)-u(x_0)-D u(x_0)\cdot(x-x_0)\le\mu^{-1}|x-x_0|^2.\]
Then, there exist constants $\beta\in (0, 1)$ and $C>0$ depending only on $n$, $\lambda$, $\Lambda$, $\alpha$, $\mu$, 
$\mathrm{diam}(\Omega)$, and the  $C^2$ regularity of $\p\Omega$, 
such that 
\[[Du]_{C^{\beta}(\overline{\Omega})} \leq C.\]
\end{thm}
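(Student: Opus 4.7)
The overall strategy is to combine Caffarelli's classical interior $C^{1,\beta_0}$ theorem for Monge--Amp\`ere equations with bounded right-hand side with an affine rescaling adapted to the geometry of maximal interior sections, and then to patch the resulting local interior estimates with the pointwise boundary data supplied by Proposition \ref{U-bound-grad2}.

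First, I would fix an interior point $y\in\Omega$ with $h:=\bar h(y)\le c_\ast$ and let $x_0\in\partial\Omega$ be the tangent point of $S_u(y,h)$. Consider the affine rescaling
\[
u_h(\xi) := h^{-1}\bigl[u(y+A_h\xi)-u(y)-Du(y)\cdot A_h\xi-h\bigr], \qquad A_h=\operatorname{diag}(h^{1/2},\ldots,h^{1/2},h^{1/(2-\alpha)}).
\]
Proposition \ref{tan_sec0} guarantees that $\lambda_0\le\det D^2 u_h\le\Lambda_0$ on $S_{u_h}(0,3/4)$ and that this section has bounded geometry ($\kappa\calE\subset S_{u_h}(0,3/4)\subset 2C\calE$), so Caffarelli's interior regularity theorem supplies a universal $\beta_0\in(0,1)$ with $[Du_h]_{C^{\beta_0}(S_{u_h}(0,1/2))}\le C$. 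Undoing the rescaling via $Du(y+A_h\xi)-Du(y)=h\,A_h^{-T}Du_h(\xi)$ and splitting into tangential and normal components, I would carefully track the powers of $h$ that appear: the dominant contribution comes from the normal direction, where the prefactor $h^{(1-\alpha-\beta_0)/(2-\alpha)}$ stays bounded as $h\to 0$ only if $\beta_0\le 1-\alpha$. Setting $\beta:=\min(\beta_0,1-\alpha)$ then produces the local H\"older estimate
\[
|Du(p)-Du(q)|\le C\,|p-q|^{\beta}\qquad\text{for all }p,q\in S_u(y,h/2),
\]
with a universal constant independent of $y$.

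To promote this to the global bound, I would case-split on the sizes of $r:=|x-z|$ and $d_x,d_z:=\operatorname{dist}(x,\partial\Omega),\operatorname{dist}(z,\partial\Omega)$. Points with $\min(d_x,d_z)\ge c_0>0$ are handled by standard interior $C^{2,\alpha'}$ theory on a ball of fixed size. When $r$ is small compared to $d_x$ so that $z$ lies in a maximal half-section centered at $x$, the local estimate above applies directly. In the remaining boundary regime where $r\gtrsim d_x$, I would use Proposition \ref{tan_sec0} to control $|Du(w)-Du(x_0(w))|\le C d_w^{1-\alpha}$ at the tangent point of the maximal section through each of $w\in\{x,z\}$, and then bridge between the two resulting boundary points using $|Du(z_0)-Du(x_0)|\le C|z_0-x_0|^{1-\alpha}$ from Proposition \ref{U-bound-grad2}. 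Since $d_w\le|w-x_0(w)|$ and $\beta\le 1-\alpha$, each contribution is absorbed into $Cr^{\beta}$.

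The main obstacle is exactly this last gluing step: the tangent point $x_0$ of the maximal section at $y$ need not be the closest boundary point to $y$ (one only knows $|y-x_0|\le Ch^{1/2}\sim C d_y^{(2-\alpha)/2}$, which is larger than $d_y$), so one must be careful that bridging across boundary points does not degrade the exponent. Tracking this degradation, and if necessary replacing $\beta$ by a slightly smaller universal exponent of the form $\min(\beta_0,2(1-\alpha)/(2-\alpha))$, is the technical heart of the argument; once the gluing is set up, Proposition \ref{tan_sec0} has done all the geometric work.
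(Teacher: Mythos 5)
Your proposal follows essentially the same route as the paper's proof: rescale the maximal interior section by $A_{\bar h}$, apply Caffarelli's interior $C^{1,\beta_0}$ estimate, note that the power counting in the normal direction forces the exponent below $1-\alpha$, and then glue with the pointwise boundary estimates of Proposition \ref{U-bound-grad2} through the tangent and nearest boundary points, with the same case split on $|x-z|$ versus the distances to $\partial\Omega$ — this is exactly Steps 1--3 of the paper. Two small slips do not affect the argument but should be fixed: in the uniformly interior region you can only invoke Caffarelli's $C^{1,\beta}$ estimate (the right-hand side is merely bounded there, not H\"older, so $C^{2,\alpha'}$ theory is unavailable), and the degraded gluing exponent coming from $|x-x_0(x)|\lesssim \bar h^{1/2}\sim d_x^{(2-\alpha)/2}$ should be of the form $(1-\alpha)(2-\alpha)/2$ (the paper takes $\alpha_1=\alpha_0(1+\alpha_0)/2$ with $\alpha_0<1-\alpha$), not $2(1-\alpha)/(2-\alpha)$, which exceeds $1-\alpha$ and so cannot be attained.
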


\begin{proof}
We divide the proof into several steps.
 
 \smallskip
\noindent
{\bf Step 1.} Oscillation estimate for $Du$ in a \index{sections of Aleksandrov solution!maximal interior section}maximal interior section.
For $y\in \Omega$, let $S_u(y, \bar h)$ be the maximal interior section of $u$ centered at $y$, and let  $y_0 \in \p \Omega$ satisfy
\[|y-y_0|=r:=\dist(y,\p\Omega).\] 
We show that, if $r$ is small, universal, then for some universal constant $C_1>0$, and $\alpha_1:= \alpha_0(1+\alpha_0)/2$ where $\alpha_0\in (0, 1-\alpha)$ is universal,
\[|Du(z_1)-Du( z_2)| \leq C_1|z_1-z_2|^{\alpha_0}\quad \text{ in } S_u(y,\bar h/2)\quad\text{ and }\quad |Du(y)-Du(y_0)|\leq r^{\alpha_1}.\]
Indeed,  
if $r\leq c_1$ where $c_1$ is small, universal, then $\bar h\leq c$, and 
by Proposition \ref{tan_sec0} applied at the point $x_0\in \p S_u(y, \bar h) \cap \p \Omega,$ we have 
 $$c\bar h^{\frac{1}{2-\alpha}}\leq r\leq C\bar h^{\frac{1}{2-\alpha}},\quad  |Du(y)-Du(x_0)| \le C \bar h^{\frac{1-\alpha}{2-\alpha}},\quad c \calE_{\bar h}(x_0) \subset S_u(y, \bar h)-y \subset C \calE_{\bar h}(x_0).$$

For simplicity, we can assume $\Omega\subset\R^n_{+}=\{x\in\R^n: x_n>0\}$, $x_0=0\in\p\Omega$,
$u(0)=0$, and $Du(0)=0$. Then $\calE_{\bar h}(x_0) = A_{\bar h} \calE$.
Consider the rescaling $u_{\bar h}$ of $u$ given by
$$u_{\bar h}(\tilde x):=\bar h^{-1} [u( y+A_{\bar{h}}\tilde x)-u(y)-Du(y)\cdot ( A_{\bar{h}}\tilde x)-\bar h].$$
Let 
\[\tilde S_t=A_{\bar h}^{-1}(S_u(y, t \bar h)- y).\]
Then, as in \eqref{uh_res} and the arguments following it, we can find  universal constants $\lambda_0,\Lambda_0$, $c, C$ such that 
\[\lambda_0 \leq \det D^2 u_{\bar h}\leq \Lambda_0\quad\text{in } S_{u_{\bar h}}(0, 3/4),\quad B_c(0) \subset \tilde S_{3/4} \subset B_C(0).\]
By Caffarelli's interior H\"older gradient estimates for the Monge-Amp\`ere equation \cite{Cc1a},  there exist
universal constants $\alpha_0\in (0, 1-\alpha)$ and $C_1$ such that
\begin{equation}
\label{c1aeq}
|D u_{\bar h}(\tilde z_1)-D u_{\bar h}(\tilde z_2)| \le C_1(n,\lambda,\Lambda, \alpha) |\tilde z_1-\tilde z_2|^{\alpha_0} \quad \text{for all }\tilde z_1,\tilde z_2 \in \tilde S_{1/2}.
\end{equation}
Moreover, we have the following size estimate for sections
\begin{equation}
\label{sizeeq}
\tilde S_{\delta}\subset B_{C_1 \delta^{\frac{1}{1+\alpha_0}}}(0)\quad\text{for all } \delta\in (0, 1/2).
\end{equation}

For $\tilde z\in \tilde S_1$, let $z= y+  A_{\bar{h}}\tilde z$. 
Rescaling back the estimate (\ref{c1aeq}), and using
$ \tilde z_1-\tilde z_2=A_{\bar h}^{-1}(z_1-z_2)$, 
we find, for all $z_1, z_2 \in  S_u(y, \bar h/2)$,
\begin{eqnarray*}|Du(z_1)-Du( z_2)|  &=& \big|\bar h (A^t_{\bar h})^{-1}(Du_{\bar h}(\tilde z_1)-Du_{\bar h}(\tilde z_2))\big|\\
 &\leq& C_1  \bar h \|A^{-1}_{\bar h}\|^{1+\alpha_0} |z_1-z_2|^{\alpha_0} \\&\leq&  C_1  \bar h (C\bar h^{-\frac{1}{2-\alpha}})^{1+\alpha_0} |z_1-z_2|^{\alpha_0}\\
 &\leq&  C_1|z_1-z_2|^{\alpha_0},
\end{eqnarray*}
if $c_1$ is small.

From $|y-y_0|=r$, we have
$$|x_0-y_0|\leq |x_0-y|+ |y-y_0| \leq C\bar h^{1/2}  + r \leq C r^{\frac{2-\alpha}{2}},$$
if $c_1$ is small.
 By  Proposition \ref{U-bound-grad2}, we then find
\begin{eqnarray*}|Du(y)-Du(y_0)| &\leq& |Du(y)-Du(x_0)|+|Du(x_0)-Du(y_0)|\\ &\leq& C\bar h^{\frac{1-\alpha}{2-\alpha}} + C|x_0-y_0|^{1-\alpha}\leq  r^{\alpha_1}.  
\end{eqnarray*}

 \smallskip
\noindent
{\bf Step 2.} Oscillation estimate for $Du$ near the boundary. 

Let $x, y\in\Omega$ with $\max\{\dist(x,\p\Omega), \dist(y,\p\Omega), |x-y|\}\leq c_1$ small.  
We show that 
\[|Du(x)-Du(y)|\leq \max\{C_1|x-y|^{\alpha_0}, |x-y|^{\alpha_1}\}.\] 
Indeed, let $x_0, y_0\in\p\Omega$ be such that \[|x-x_0|=\dist(x,\p\Omega):= r_x \quad\text{and}\quad |y-y_0|=\dist(y,\p\Omega):= r_y.\] We can assume $r_y\leq r_x\leq c_1$.

 Note that
$(1/2) (S_u(y,\bar h(y))-y)\subset S_u(y,\bar h(y)/2)-y$.
Thus, for $c_1$ and $c$ small, universal, 
$$S_u(y, \bar h(y)/2)\supset B_{\bar h(y)^{\frac{1}{2-\alpha}}/(2C)}(y)\supset B_{cr_y} (y).$$

 \smallskip
 If $|y-x|\leq c r_x$, then $y\in S_u(x, \bar h(x)/2)$ and hence, 
  Step 1 gives \[|Du(x)-Du(y)|\leq C_1|x-y|^{\alpha_0}.\]

Consider now the case $|y-x|\geq c r_x$. Then,
\[|x_0-y_0|\leq |x_0-x| + |x-y| + |y-y_0|\leq 2 r_x + |x-y| \leq C|x-y|.\]
 Thus, from 
\begin{equation*}
|Du(x)-Du(y)|\leq |Du(x)-Du(x_0)|+ |Du(x_0)- Du(y_0)| + |Du(y_0)-Du(y)|,
\end{equation*}
Step 1 and Proposition \ref{U-bound-grad2}, and noting that $1-\alpha>\alpha_1$, we have
\begin{eqnarray*}
|Du(x)-Du(y)| &\leq& r_x^{\alpha_1} + C|x_0- y_0|^{1-\alpha}  + r_y^{\alpha_1}
\\ &\leq & 2r_x^{\alpha_1} + C|x-y|^{1-\alpha} \leq |x-y|^{\alpha_1}.
\end{eqnarray*}

 \smallskip
\noindent
{\bf Step 3.} Conclusion. 
 By the convexity of $u$, we  have $\text{osc}_{\Omega}|Du|\leq \text{osc}_{\p\Omega}|Du|\leq C(n,\alpha,\Omega)$.
Combining this with Step 2 and the interior H\"older gradient estimates, we easily obtain the conclusion of the theorem.
 \end{proof}

 \begin{rem}
\label{sizeS}
Assume that $u$ and $\Omega$ satisfy the hypotheses of  Theorem \ref{SZthm}.
Let $y\in\Omega$ and let $x_0\in \p S_u(y, \bar h(y)) \cap \p \Omega$. Then, the size estimate \eqref{sizeeq} implies that in the orthogonal coordinate frame $(\tau_{x_0}, -\nu_{x_0})$, for any $\delta\in (0, 1/2)$, $S_u(y,\delta \bar h(y))-y$ is contained in the rectangular box centered at the origin with size lengths
$C \delta^{\frac{1}{1+\alpha_0}}(\bar h^{\frac{1}{2}}, \cdots, \bar h^{\frac{1}{2}}, \bar h^{\frac{1}{2-\alpha}})$, that is, \[S_u(y, \delta\bar h(y))-y\subset  C\delta^{\frac{1}{1+\alpha_0}} A_{\bar h(y)} B_1(0).\] 
\end{rem}
 \section{Vitali covering lemma} 
 \label{SectV}
 In this section, we prove a Vitali-type covering lemma for sections that will be used in the global $W^{2, p}$ estimates. Since $W^{2, p}$ estimates are standard in the interior, we only consider sections 
whose concentric maximal interior sections have small heights.

Recall that 
\[\Omega_c:=\{x\in\Omega: \dist(x,\p\Omega)<c\},\]
where we take $c$ to be small, universal.
 
For nonsingular Monge-Amp\`ere equations, Vitali-type covering lemmas follow from standard arguments using the engulfing properties of sections. Instead of establishing these properties for our singular equation \eqref{MA1}, we will use the following observation.
\begin{lem}
\label{h12lem}
Assume that $u$ and $\Omega$ satisfy the hypotheses of  Theorem \ref{SZthm}. Then,
there exist a universal constant
$\delta\in (0,1/4)$  with the following property. If $x_1, x_2\in\Omega_c$,   $S_u(x_1, \delta \bar h(x_1))\cap S_u(x_2, \bar h_2(x_2))\neq\emptyset$, and $2\bar h(x_1)\geq \bar h(x_2)$, 
   then \[S_u(x_2, \delta \bar h(x_2))\subset
S_u(x_1, \bar h(x_1)/2).\] 
\end{lem}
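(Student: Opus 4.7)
Set $h_i := \bar h(x_i)$ and let $y_i\in\partial\Omega$ denote the boundary tangency point of $S_u(x_i,h_i)$. The strategy is to rescale at $x_1$ by the matrix $A_{h_1}$ from Proposition~\ref{tan_sec0}, transferring the statement to a normalized Monge--Amp\`ere setting where standard section engulfing arguments apply.

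\textbf{Step 1: comparability of heights.} Pick $z\in S_u(x_1,\delta h_1)\cap S_u(x_2,h_2)$. Since $\delta<1/2$, $z$ lies in $S_u(x_1,3h_1/4)$, so by Step~5 of the proof of Proposition~\ref{tan_sec0},
\[
\mathrm{dist}(z,\partial\Omega)\ge\kappa_0 h_1^{1/(2-\alpha)}.
\]
On the other hand, the shape estimate $S_u(x_2,h_2)-x_2\subset\kappa_0^{-1}\mathcal{E}_{h_2}(y_2)$ combined with the $C^2$ regularity of $\partial\Omega$ forces $\mathrm{dist}(z,\partial\Omega)\le Ch_2^{1/(2-\alpha)}$. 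Together with the hypothesis $h_2\le 2h_1$ this gives $c_*h_1\le h_2\le 2h_1$ with a universal $c_*>0$.

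\textbf{Step 2: rescaling.} Setting $v:=u_{h_1}$ and $\tilde x_2:=A_{h_1}^{-1}(x_2-x_1)$, a direct calculation gives the identity $S_u(x_2,th_1)=x_1+A_{h_1}S_v(\tilde x_2,t)$ for every $t>0$. Hence in the rescaled coordinates the hypothesis reads
\[
\tilde z\in S_v(0,\delta)\cap S_v(\tilde x_2,\tilde h_2),\qquad \tilde h_2:=h_2/h_1\in[c_*,2],
\]
and the conclusion reduces to $S_v(\tilde x_2,\delta\tilde h_2)\subset S_v(0,1/2)$. Moreover, $v$ itself satisfies the hypotheses of Theorem~\ref{SZthm} on the rescaled domain $\tilde\Omega:=A_{h_1}^{-1}(\Omega-x_1)$ with universal constants, so Proposition~\ref{tan_sec0} and Remark~\ref{sizeS} apply directly to $v$.

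\textbf{Step 3: localization and engulfing.} The bounds $|x_i-y_i|\le Ch_i^{1/2}$ and $|y_1-y_2|\le Ch_1^{1/2}$ (cf.\ Step~2 in the proof of Theorem~\ref{thmb}) decompose $x_2-x_1$ in the frame at $y_1$ into a tangential component of size $O(h_1^{1/2})$ and a normal component of size $O(h_1^{1/(2-\alpha)})$, which under $A_{h_1}^{-1}$ both become $O(1)$. Consequently $\tilde x_2\in B_{C_0}(0)$ for a universal $C_0$, and $v$ is a normalized Monge--Amp\`ere function on a universal neighborhood of $\tilde x_2$. By Remark~\ref{sizeS} for $v$, the set $S_v(\tilde x_2,\delta\tilde h_2)$ has diameter $O(\delta^{1/(1+\alpha_0)})$, which vanishes as $\delta\to 0$. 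Combined with the engulfing property for sections of normalized Monge--Amp\`ere applied to the pair $S_v(0,\delta),S_v(\tilde x_2,\tilde h_2)$ meeting at $\tilde z$, this forces $S_v(\tilde x_2,\delta\tilde h_2)\subset S_v(0,1/2)$ once $\delta$ is smaller than a universal constant. Unrescaling yields the lemma.

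\textbf{Main obstacle.} The delicate quantitative point is producing a \emph{universal} margin inside $S_v(0,1/2)$ that can absorb the small sub-section at $\tilde x_2$. The factor $2$ in the hypothesis $2\bar h(x_1)\ge\bar h(x_2)$ is precisely what limits how far $\tilde x_2$ can sit from the origin in the normalized rescaled picture; combining this location bound with the shape estimate $\kappa_0\mathcal{E}_{1/2}\subset S_v(0,1/2)-0$ and with the gradient H\"older control from Theorem~\ref{thmb} in the normalized region is where the argument is most subtle.
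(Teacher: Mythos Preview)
Your rescaling strategy has a genuine gap in Step~2. The claim that the rescaled function $v=u_{h_1}$ ``satisfies the hypotheses of Theorem~\ref{SZthm} on $\tilde\Omega:=A_{h_1}^{-1}(\Omega-x_1)$ with universal constants'' is false: as $h_1\to 0$ the domain $\tilde\Omega$ has diameter of order $h_1^{-1/2}$, its boundary curvature is not uniformly controlled, and $\det D^2 v$ is \emph{not} globally comparable to $[\dist(\cdot,\partial\tilde\Omega)]^{-\alpha}$ because the anisotropic map $A_{h_1}$ distorts the distance function. What \emph{is} available (from the proof of Proposition~\ref{tan_sec0}) is only the two-sided bound $\lambda_0\le\det D^2 v\le\Lambda_0$ on $S_v(0,3/4)$. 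So you cannot invoke Proposition~\ref{tan_sec0} or Remark~\ref{sizeS} for $v$, and in Step~3 the ``engulfing property for sections of normalized Monge--Amp\`ere applied to the pair $S_v(0,\delta),\,S_v(\tilde x_2,\tilde h_2)$'' is not justified: the full section $S_v(\tilde x_2,\tilde h_2)$ corresponds to $S_u(x_2,h_2)$, which is tangent to $\partial\Omega$ at $y_2$ and therefore cannot lie inside $S_u(x_1,3h_1/4)\Subset\Omega$, i.e.\ it escapes the region where $\det D^2 v$ is bounded. Standard interior engulfing simply does not see this section.

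The paper avoids rescaling entirely and argues directly in the original coordinates. It uses Remark~\ref{sizeS} for $u$ at both $x_1$ and $x_2$ to get $S_u(x_i,\delta h_i)-x_i\subset C\delta^{1/(1+\alpha_0)}A_{h_i}B_1(0)$ in the boundary frame at $z_i:=\partial S_u(x_i,h_i)\cap\partial\Omega$, and uses Proposition~\ref{tan_sec0} to get the inner ellipsoid $\kappa\mathcal{E}_{h_1}(z_1)\subset S_u(x_1,h_1/2)-x_1$. For any $y\in S_u(x_2,\delta h_2)$ one then checks by hand that $|(y-x_1)\cdot\tau_{z_1}|\le\kappa h_1^{1/2}$ and $|(y-x_1)\cdot\nu_{z_1}|\le\kappa h_1^{1/(2-\alpha)}$, the key point for the normal component being the frame comparison $|\nu_{z_1}-\nu_{z_2}|\le C|z_1-z_2|\le Ch_1^{1/2}$ (from $\partial\Omega\in C^2$) together with the decomposition through the common point $z$. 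Your height comparability $c_*h_1\le h_2$ from Step~1 is correct but, notably, is not needed in this direct argument; only $h_2\le 2h_1$ is used.
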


\begin{proof}
Let 
\[\bar\delta:=\delta^{\frac{1}{1+\alpha_0}}, \quad h_1:=\bar h (x_1),\quad h_2:= \bar h(x_2),\quad z_1=\p S_u(x_1, h_1)\cap \p\Omega, \quad  z_2=\p S_u(x_2, h_2)\cap \p\Omega.\]
By Remark \ref{sizeS}, in the orthogonal coordinate frame $(\tau_{z_i}, -\nu_{z_i})$, 
\begin{equation}
\label{sizedelh}
S_u(x_i, \delta h_i)-x_i\subset  C\bar \delta A_{h_i} B_1(0) \quad\text{for } i=1, 2.\end{equation}
Assume $2h_1\geq h_2$. Since $S_u(x_1, \delta  h_1)\cap S_u(x_2, h_2)\neq\emptyset$, the triangle inequality gives
\begin{equation}
\label{x12ineq}
|x_1-x_2| \leq \bar\delta (c^{-1} h_1)^{\frac{1}{2}} +  \bar\delta (c^{-1} h_2)^{\frac{1}{2}} \leq C \bar\delta h_1^{\frac{1}{2}}. \end{equation}
By Proposition \ref{tan_sec0}, there exists a universal constant $\kappa$ such that 
 \begin{equation}
 \label{sizeh12}
 \kappa \calE_{ h_i}(z_i) \subset S_{u}(x_i,  h_i/2) -x_i\ \subset S_{u}(x_i,  h_i) -x_i\subset \kappa^{-1} \calE_{h_i}(z_i)\quad\text{for } i=1, 2.\end{equation}
 We have
 \[|z_1-z_2| \leq |z_1-x_1| + |x_1-x_2| + |x_2-z_2| \leq C h_1^{\frac{1}{2}}.\]
 Since $\p\Omega$ is $C^2$, we have
 \begin{equation} 
 \label{nuz12}
 |\nu_{z_1}- \nu_{z_2}|\leq C|z_1-z_2| \leq C h_1^{\frac{1}{2}}.\end{equation}
 Now, let $y\in S_u(x_2, \delta h_2)$. In view of \eqref{sizeh12}, 
 we show that $y\in S_u(x_1, h_1/2)$ for $\delta$ small, universal by establishing that
  \begin{equation}
  \label{tanineq}
  |(y-x_1)\cdot \tau_{z_1}| \leq \kappa h_1^{\frac{1}{2}}\end{equation}
  and
 \begin{equation}
 \label{nuineq}
 |(y-x_1)\cdot \nu_{z_1}| \leq \kappa h_1^{\frac{1}{2-\alpha}}.\end{equation}
 For the tangential components, we write
 \[(y-x_1)\cdot \tau_{z_1} = (y-x_2)\cdot \tau_{z_1} + (x_2-x_1)\cdot \tau_{z_1}.\]
 Then, recalling \eqref{sizedelh} and \eqref{x12ineq}, we obtain 
 \[|(y-x_1)\cdot \nu_{z_1}| \leq C \bar\delta h_1^{\frac{1}{2}}+ C \bar\delta h_1^{\frac{1}{2}}  \leq \kappa h_1^{\frac{1}{2}},\]
 if $\bar \delta$ is small. Since $\bar \delta = \delta^{\frac{1}{1+\alpha_0}}$, this is the case when $\delta$ is small. 
 
 For the normal component, choose $z\in S_u(x_1, \delta  h_1)\cap S_u(x_2, h_2)$ and we write
 \[ (y-x_1)\cdot \nu_{z_1} = (y-z)\cdot (\nu_{z_1}-\nu_{z_2}) + (y-x_2)\cdot \nu_{z_2} + (x_2-z)\cdot \nu_{z_2} + (z-x_1)\cdot \nu_{z_1}.\]
 In view of \eqref{sizedelh},  \eqref{nuz12}, we find
 \[| (y-x_1)\cdot \nu_{z_1} | \leq (C\bar\delta   h_1^{\frac{1}{2}})(C h_1^{\frac{1}{2}}) + C\bar \delta h_1^{\frac{1}{2-\alpha}} \leq \kappa h_1^{\frac{1}{2-\alpha}},\]
 if $\bar \delta$ and $h_1\leq c$ where $c$ is small. The proof of the lemma is complete.
\end{proof}

 \begin{lem}[Vitali covering lemma] 
 \label{V_lem}
Assume that $u$ and $\Omega$ satisfy the hypotheses of  Theorem \ref{SZthm}.
Then, there exist a universal constant
$\delta\in (0,1/4)$ 
and
 a countable subcollection of disjoint sections 
 $\displaystyle \{S_u(x_i, \delta h(x_i))\}_{i=1}^{\infty}$, where $x_i\in\Omega_c$, such that
 $$\displaystyle \Omega_c\subset \bigcup_{i=1}^\infty S_u(x_i, \bar h(x_i)/2).$$
\end{lem}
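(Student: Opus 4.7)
The plan is a stratified greedy Vitali selection, with Lemma \ref{h12lem} furnishing the one-sided engulfing property needed to close the argument. I first take $c$ small enough universal that $\bar h(x)\le c_\ast$ for every $x\in\Omega_c$, where $c_\ast$ is the threshold of Proposition \ref{tan_sec0}. Step 5 of that proposition, applied with $z=y=x$, then yields $\dist(x,\p\Omega)\ge\kappa_0\bar h(x)^{1/(2-\alpha)}$, hence
\[\bar h(x)\le H_0:=(\kappa_0^{-1}c)^{2-\alpha}\qquad\text{for all }x\in\Omega_c,\]
and I decompose $\Omega_c$ into dyadic height layers $G_k:=\{x\in\Omega_c: H_0 2^{-k-1}<\bar h(x)\le H_0 2^{-k}\}$, $k\ge 0$.

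Next I construct $\{x_i\}$ layer by layer. Having chosen centers in $G_0,\dots,G_{k-1}$, let $\mathcal F_k\subset G_k$ be a maximal (by inclusion) subset such that the collection $\{S_u(x,\delta\bar h(x))\}_{x\in\mathcal F_0\cup\dots\cup\mathcal F_k}$ remains pairwise disjoint. Within $G_k$ heights are bounded below by $H_0 2^{-k-1}$, and Proposition \ref{tan_sec0} together with Lemma \ref{volsec} produces a uniform lower bound on $|S_u(x,\delta\bar h(x))|$ across $\mathcal F_k$; since these disjoint sections all sit inside the bounded domain $\Omega$, $\mathcal F_k$ is finite. Hence $\{x_i\}:=\bigcup_k\mathcal F_k$ is countable and its $\delta$-sections are disjoint by construction.

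The covering is then a direct consequence of Lemma \ref{h12lem}. Given $y\in\Omega_c$ with $y\in G_k$, either $y\in\mathcal F_k$ (and trivially $y\in S_u(y,\bar h(y)/2)$), or maximality forces some $x_i\in\mathcal F_0\cup\dots\cup\mathcal F_k$ (with $x_i\in G_j$ for some $j\le k$) for which $S_u(y,\delta\bar h(y))\cap S_u(x_i,\delta\bar h(x_i))\ne\emptyset$. Then $\bar h(x_i)>H_0 2^{-j-1}\ge H_0 2^{-k-1}\ge\bar h(y)/2$, so $2\bar h(x_i)\ge\bar h(y)$, and Lemma \ref{h12lem} delivers $S_u(y,\delta\bar h(y))\subset S_u(x_i,\bar h(x_i)/2)$, whence $y\in S_u(x_i,\bar h(x_i)/2)$. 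The main obstacle is the a priori bound $\bar h(x)\le c_\ast$ on $\Omega_c$, which amounts to ruling out maximal interior sections of non-small height when the center is close to $\p\Omega$; once this is secured, the remainder is the classical greedy Vitali scheme, tailored to the one-sided engulfing provided by Lemma \ref{h12lem}.
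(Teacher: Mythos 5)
Your argument is essentially the paper's proof: a greedy selection of maximal disjoint $\delta$-sections over dyadic layers of the heights $\bar h$, finiteness of each layer via the volume lower bound for $S_u(x,\delta\bar h(x))$, and the covering conclusion from the height comparison $2\bar h(x_i)\ge \bar h(y)$ combined with the one-sided engulfing of Lemma \ref{h12lem}. The only shaky point is your justification of $\bar h\le c_\ast$ on $\Omega_c$ — invoking the distance estimate of Proposition \ref{tan_sec0} is circular, since that proposition already assumes $\bar h(y)\le c_\ast$ — but the paper takes this smallness for granted as well (cf.\ Step 1 of Theorems \ref{thmb} and \ref{thmw}), and it can be secured directly, e.g.\ from $\bar h(x)=\min_{z\in\p\Omega}\bigl(u(z)-u(x)-Du(x)\cdot(z-x)\bigr)\le C\,\dist(x,\p\Omega)$ using the Lipschitz bound on $u$.
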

\begin{proof} Let $\delta$ be as in Lemma \ref{h12lem}.
Let $\mathcal{S}$
be the collection of sections $S^x= S_u(x, h (x))$, where $x\in\Omega_c$ and $h(x)=\delta \bar h(x)$.
Let 
$d(\mathcal{S}):= \sup\{h(x): S^x\in\mathcal{S}\}\leq c.$
Define $$\mathcal{S}_i\equiv \{S^x\in \mathcal{S}: \frac{d(\mathcal{S})}{2^i}<h(x) \leq \frac{d(\mathcal{S})}{2^{i-1}}\}~(i=1,2,\ldots),$$ and $\mathcal{F}_i\subset \mathcal{S}_i$ as follows. 
Let $\mathcal{F}_1$ be a maximal disjoint collection of sections in $\mathcal{S}_1$. By the volume estimate, $\mathcal{F}_1$ is finite. 
Assuming $\mathcal{F}_1,\ldots, \mathcal{F}_{i-1} $ have been selected, we choose $\mathcal{F}_i$
to be any maximal disjoint subcollection of
$$\Bigg\{S\in \mathcal{S}_i: S\cap S^{x}=\emptyset~\text{for all~} S^x\in \bigcup_{j=1}^{i-1}\mathcal{F}_j\Bigg\}.$$ Again, each $\mathcal{F}_i$ is a finite set. Let
$\mathcal{F}:=\bigcup_{k=1}^{\infty} \mathcal{F}_i$,
and consider the countable subcollection of disjoint sections $S_u(x_i, h(x_i))$ where $S^{x_i}\in \mathcal{F}$. 

We now show that this subcollection
 satisfies the conclusion of the lemma. Indeed, let $S^x$ be any section in $\mathcal{S}$. Then,  there is an index $j$ such that $S^x\in \mathcal{S}_j$. By the maximality of $\mathcal{F}_j$, there is a section $S^y\in \bigcup_{i=1}^j 
\mathcal{F}_i$ with $S^x\cap S^y \neq\emptyset$. Note that
$h(x) \leq 2 h(y)$
because $h(y)> \frac{d(\mathcal{S})}{2^j} $ and $h(x) \leq \frac{d(\mathcal{S})}{2^{j-1}}$. Thus,
by the choice of $\delta$,  \[S^x=S_u(x, \delta \bar h(x))\subset S_u(y,  \bar h(y)/2)\subset \bigcup_{i=1}^\infty S_u(x_i, \bar h(x_i)/2).\]
The lemma is proved.
\end{proof}

\section{Global second order derivative estimates}
\label{SectW2p}
In this section, we establish global $W^{2,p}$ estimates for the solution to \eqref{MA1}. Clearly, Theorem \ref{mainthm1}(ii) is a consequence of Theorem \ref{mainthm1}(i) and the following result.
\begin{thm}[Global $W^{2,p}$ estimates for singular Monge-Amp\`ere equations]
\label{thmw}
Let $\Omega$ be a bounded convex domain in $\R^n$ with $C^2$ boundary $\partial\Omega$. Let $\alpha\in (0, 1)$ and $0<\lambda\leq\Lambda$. Assume $u\in C(\overline{\Omega})$ is a convex function satisfying
\begin{equation*}
  \det D^2 u= g  [\emph{dist}(\cdot,\p\Omega)]^{-\alpha} \quad\mathrm{in}\;\Omega, \quad g\in C(\overline{\Omega}), \quad \lambda \leq g\leq \Lambda,
\end{equation*}
 and on $\partial\Omega$, $u$ separates quadratically from its tangent hyperplane, namely,  
there exists $\mu>0$ such that for all $ x_0, x\in \p\Omega$, 
\[\mu|x-x_0|^2\le u(x)-u(x_0)-D u(x_0)\cdot(x-x_0)\le\mu^{-1}|x-x_0|^2.\]
Then, for each $p<1/\alpha$, there exists a constant $C>0$ depending only on $n$, $p$, $\lambda$, $\Lambda$, $\alpha$, $\mu$, the modulus of continuity of $g$ in $\overline\Omega$, 
$\mathrm{diam}(\Omega)$, and the  $C^2$ regularity of $\p\Omega$, 
such that 
\[\|D^2 u\|_{L^p(\Omega)} \leq C.\]
\end{thm}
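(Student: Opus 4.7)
\smallskip
\textbf{Reduction to the boundary strip.} Since interior second-derivative estimates are standard, the plan is to focus on the boundary strip $\Omega_c$. On $\Omega\setminus\Omega_c$, the right-hand side $g\dist^{-\alpha}(\cdot,\p\Omega)$ is continuous and bounded between two positive constants depending on $c$, so Caffarelli's interior $W^{2,p}$ estimate gives $\|D^2u\|_{L^p(\Omega\setminus\Omega_c)}\leq C(p)$ for every finite $p$. For $\Omega_c$, I would invoke the Vitali covering of Lemma \ref{V_lem} to write $\Omega_c\subset\bigcup_{i}S_u(x_i,\bar h(x_i)/2)$, with the dilates $S_u(x_i,\delta\bar h(x_i))$ pairwise disjoint, and then estimate $\|D^2u\|^p$ on each covering section separately.

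\smallskip
\textbf{Rescaling each maximal section and applying Caffarelli's $W^{2,p}$.} Fix $x_i\in\Omega_c$, let $h=\bar h(x_i)$, and let $z_i=\p S_u(x_i,h)\cap\p\Omega$. After an orthogonal change of coordinates placing $z_i$ at the origin with $\nu_{z_i}=-e_n$, I would consider the rescaling
\[u_h(\tilde x):=h^{-1}\bigl[u(x_i+A_h\tilde x)-u(x_i)-Du(x_i)\cdot(A_h\tilde x)-h\bigr],\]
which by Proposition \ref{tan_sec0} satisfies $B_c(0)\subset S_{u_h}(0,3/4)\subset B_C(0)$. The identity
\[\det D^2u_h(\tilde x)=h^{\alpha/(2-\alpha)}\,g(x_i+A_h\tilde x)\,\dist^{-\alpha}(x_i+A_h\tilde x,\p\Omega),\]
together with $\dist(\cdot,\p\Omega)\sim h^{1/(2-\alpha)}$ on $S_u(x_i,3h/4)$ (Step 5 of Proposition \ref{tan_sec0}), shows that $\lambda_0\leq\det D^2u_h\leq\Lambda_0$ with universal $\lambda_0,\Lambda_0$. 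Moreover, provided $h$ is below a universal threshold, the uniform continuity of $g$ on $\overline\Omega$ and the $C^2$ regularity of $\p\Omega$ make both $g(x_i+A_h\tilde x)$ and $h^{-1/(2-\alpha)}\dist(x_i+A_h\tilde x,\p\Omega)$ have controlled modulus of continuity on $S_{u_h}(0,3/4)$, uniformly in $x_i$. Caffarelli's interior $W^{2,p}$ estimate then yields, for any $p<\infty$, a universal bound
\[\|D^2u_h\|_{L^p(S_{u_h}(0,1/2))}\leq C.\]

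\smallskip
\textbf{Unscaling and summation.} From $D^2u(x_i+A_h\tilde x)=h\,A_h^{-1}D^2u_h(\tilde x)A_h^{-1}$ and $\|A_h^{-1}\|^2=h^{-2/(2-\alpha)}$, the change of variables produces
\[\int_{S_u(x_i,h/2)}\|D^2u\|^p\,dx\leq C\,h^{-p\alpha/(2-\alpha)}|\det A_h|\,\|D^2u_h\|_{L^p(S_{u_h}(0,1/2))}^p\leq C\,h^{-p\alpha/(2-\alpha)}|S_u(x_i,\delta h)|.\]
Since $\dist(z,\p\Omega)\sim h^{1/(2-\alpha)}$ on $S_u(x_i,\delta h)\subset S_u(x_i,3h/4)$, the factor $h^{-p\alpha/(2-\alpha)}$ is comparable to $\dist^{-p\alpha}(z,\p\Omega)$ on that set, so
\[\int_{S_u(x_i,h/2)}\|D^2u\|^p\,dx\leq C\int_{S_u(x_i,\delta h)}\dist^{-p\alpha}(z,\p\Omega)\,dz.\]
Summing over the pairwise disjoint family $\{S_u(x_i,\delta\bar h(x_i))\}\subset\Omega$ and combining with the interior bound yields
\[\int_\Omega\|D^2u\|^p\,dx\leq C\int_\Omega\dist^{-p\alpha}(z,\p\Omega)\,dz+C(p),\]
which is finite precisely when $p\alpha<1$, i.e., $p<1/\alpha$.

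\smallskip
\textbf{Main obstacle.} The principal technical point will be justifying Caffarelli's interior $W^{2,p}$ estimate on the rescaled equation uniformly across all centers $x_i$: one must show that the modulus of continuity of $\det D^2u_h$ on $S_{u_h}(0,3/4)$ is bounded by a universal function of $h$ vanishing as $h\to 0$. This reduces to controlling $g(x_i+A_h\tilde x)$ through the uniform continuity of $g$ and $h^{-1/(2-\alpha)}\dist(x_i+A_h\tilde x,\p\Omega)$ through the $C^2$ regularity of $\p\Omega$ and Step 5 of Proposition \ref{tan_sec0}. All remaining ingredients---disjointness of the Vitali sections, volume comparisons, and the distance characterization inside each normalized section---are supplied by Proposition \ref{tan_sec0} and Lemma \ref{V_lem}.
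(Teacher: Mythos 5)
Your proposal is correct and follows essentially the same route as the paper: reduction to the strip $\Omega_c$, rescaling each maximal section by $A_{\bar h}$, applying Caffarelli's interior $W^{2,p}$ estimate to the normalized section (with the pinching and uniform continuity of the rescaled right-hand side coming from Step 5 of Proposition \ref{tan_sec0}), and then the Vitali covering of Lemma \ref{V_lem}. The only deviation is the final summation, where the paper counts sections dyadically in height ($M_d\le C d^{-\frac{n-1}{2}}$) and sums a geometric series in $d^{\frac{1-p\alpha}{2-\alpha}}$, whereas you absorb the factor $h^{-p\alpha/(2-\alpha)}$ into $\int_{S_u(x_i,\delta\bar h(x_i))}\dist^{-p\alpha}(\cdot,\p\Omega)\,dz$ and invoke disjointness plus $\int_\Omega \dist^{-p\alpha}(\cdot,\p\Omega)\,dz<\infty$ for $p\alpha<1$ --- an equivalent bookkeeping of the same estimates.
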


\begin{proof}
As in the global $W^{2, p}$ estimates for nonsingular Monge-Amp\`ere equations in \cite{Sw2p},  we divide the proof into several steps.

\noindent
{\bf Step 1.} $L^p$ estimate for the Hessian in \index{sections of Aleksandrov solution!maximal interior section}maximal interior sections. Consider the maximal interior section $S_{u}(y, h)$ of $u$ centered at $y\in\Omega$ with height $h:= \bar{h}(y)$.
Let $\bar y =\p S_u(y, h)\cap\p\Omega$. 
If $h\leq c$ where $c$ is small, universal, then Proposition~\ref{tan_sec0}   gives
\[\kappa_0 \calE_h(\bar y) \subset S_{u}(y, h)-y \subset \kappa_{0}^{-1} \calE_h(\bar y).\]
For simplicity, we can assume $\Omega\subset\R^n_{+}=\{x\in\R^n: x_n>0\}$, $\bar y=0\in\p\Omega$,
$u(0)=0$, and $Du(0)=0$. Then $\calE_{h}(\bar y) = A_{h} \calE$. Let $1<p<\infty$ to be chosen later.

We use the rescalings:
\[u_{h}(x) := 
h^{-1} \big[u ( y +  A_{h} x)-u(y)-Du(y)\cdot(A_h x) - h\big],\]
for $x\in\Omega_h:= A^{-1}_{h} (\Omega-y)$. 
Then
$$B_{\kappa_0}(0)\subset S_{u_{h}} (0, 1)\equiv A^{-1}_{h} \big(S_{u}(y, h) -y\big)\subset B_{\kappa_0^{-1}}(0).$$
We have
\begin{eqnarray*}\det D^2 u_h(x) 
&=& h^{-n} (\det A_h)^2 \det D^2 u(y+ A_h x)  \\
&=& g(y+ A_h x) \Big[h^{\frac{-1}{2-\alpha}} \dist(y+ A_h x,\p\Omega)\Big]^{-\alpha}
\equiv f(x).\end{eqnarray*}
When $x\in S_{u_{h}} (0, 3/4)$, Proposition \ref{tan_sec0} gives 
\[\kappa_0\leq h^{\frac{-1}{2-\alpha}} \dist(y+ A_h x,\p\Omega)\leq \kappa_0^{-1}.\]
Since the map $x\in S_{u_{h}} (0, 3/4)\mapsto h^{\frac{-1}{2-\alpha}} \dist(y+ A_h x,\p\Omega)$ is Lipschitz with Lipschitz norm bounded by $1$, we deduce that 
$$f\in C(\overline{S_{u_{h}} (0, 3/4)}),\quad \lambda_0\leq f= \det D^{2} u_{h} \leq\Lambda_0 \quad\text{in } S_{u_{h}} (0, 3/4), 
\quad u_{h} =0 \, \mbox{ on }\, \partial  S_{u_{h}} (0, 1).$$
By Caffarelli's interior $W^{2, p}$ estimates \cite{Cw2p}, there is constant $C(p)$ depending only on $p, n,\alpha,\mu$, $g$, and $\Omega$ such that 
\[\int_{S_{u_{h}} (0, 1/2)}|D^2  u_h|^ p\, dx\leq C(p).\]
Since
$D^{2} u( y +  A_{h} x) = h (A_{h}^{-1})^{t} \, D^{2} u_{h}(x) \, A_{h}^{-1}$,
we obtain
\begin{equation}
\label{w2p-small-local}
\begin{split}
\int_{S_{u}(y,h/2)}|D^{2} u(z)|^{p} \, dz&= h^{p} \det A_h 
\int_{S_{u_{h}} (0, 1/2)} |(A_{h}^{-1})^{t} \, D^{2} u_{h}(x) \, A_{h}^{-1}|^{p}\, dx
\\ &\leq  C(p) h^{\frac{n-1}{2} + \frac{1}{2-\alpha} -\frac{p\alpha}{2-\alpha}} \int_{S_{u_{h}} (0, 1/2)} |D^{2} u_{h}(x)|^{p}\, dx \\&\leq C(p) h^{\frac{n-1}{2} + \frac{1-p\alpha}{2-\alpha}}.
\end{split}
\end{equation}

From  Proposition~\ref{tan_sec0}, we find  that 
if  $y\in \Omega$ with $\bar{h}(y)\leq c$ small,   then
$$S_{u}(y, \bar{h}(y))\subset (y + \kappa_{0}^{-1}\calE_h)\cap\overline{\Omega}\subset \Omega_{C\bar{h}(y)^{\frac{1}{2-\alpha}}}: =\big\{x\in \overline{\Omega}: \, 
\dist(x, \partial\Omega)< C\bar{h}(y)^{\frac{1}{2-\alpha}}\big\}.$$
We can reduce $c$ so that 
\[\bar h(y)\leq c\quad\text{in } \Omega_c.\]
By Caffarelli's interior $W^{2, p}$ estimates \cite{Cw2p}, we have
\begin{equation}
\label{W2pfar}
\int_{\Omega\setminus \Omega_c} |D^{2} u|^{p}\, dx\leq C(p).\end{equation}
It remains to consider $W^{2,p}$ estimates near the boundary.

\smallskip
\noindent
{\bf Step 2.} A covering argument. By the Vitali covering Lemma (Lemma \ref{V_lem}), there exists a covering $\cup_{i=1}^{\infty} S_{u}(y_{i}, \bh(y_i)/2)$ of $\Omega_c$ where the sections $S_{u}(y_{i}, \delta\bh(y_i))$ with $y_i\in\Omega_c$ are disjoint for some 
universal
$\delta\in (0, 1/2)$. We have
\begin{equation}
\label{Lphh}
\int_{\Omega_c} |D^{2} u|^{p}\, dx \leq \sum_{i=1}^\infty\int_{S_{u}(y_{i}, \bh(y_i)/2)} |D^2 u|^{p}\, dx.
\end{equation}
We will estimate the sum 
in
(\ref{Lphh}), depending on the heights $\bar h(y_i)$. Note that, by Proposition \ref{tan_sec0}, there exists a  universal constant 
$c_0>0$ such that
\begin{equation}
\label{w21_vol}
|S_{u}(y_{i}, \delta\bar{h}(y_i))| \geq c_0 \bar{h}(y_i)^{\frac{n-1}{2} + \frac{1}{2-\alpha}}.
\end{equation}

 \smallskip
\noindent
For
$d\leq c$, we consider the family $\mathcal{F}_{d}$ of indices $i$ for sections $S_{u}(y_{i}, \bar{h}(y_i)/2)$ such that \[d/2<\bar{h}(y_i)\leq d.\] Let $M_{d}$ be the number 
of indices in $\mathcal{F}_{d}$. Since $S_{u}(y_{i}, \delta\bar{h}(y_i))\subset \Omega_{Cd^{\frac{1}{2-\alpha}}}$ are disjoint for $i\in \mathcal{F}_{d}$, we find from (\ref{w21_vol}) that
\begin{eqnarray*} M_d c_0  (d/2)^{\frac{n-1}{2} + \frac{1}{2-\alpha}}&\leq& \sum_{i\in\mathcal{F}_{d}} |S_{u}(y_{i},\delta \bar{h}(y_i))| \\ &\leq& |\Omega_{Cd^{\frac{1}{2-\alpha}}}|\leq C_{\ast}d^{\frac{1}{2-\alpha}},
\end{eqnarray*}
where $C_{\ast}>0$ depends only on $n$ and $\Omega$. Therefore
\begin{equation}\label{Md-est}
 M_{d}\leq C_{b}d^{-\frac{n-1}{2}}.
\end{equation}

\noindent
It follows from 
(\ref{w2p-small-local}) and \eqref{Md-est} that
\begin{eqnarray*}
\sum_{i\in \mathcal{F}_{d}} \int_{S_{u}(y_{i}, \bh(y_i)/2)} |D^{2} u|^{p}\, dx &\leq& C(p) M_{d} d^{\frac{n-1}{2} + \frac{1-p\alpha}{2-\alpha}}\\ &\leq&
C(p) d^{ \frac{1-p\alpha}{2-\alpha}}. 
\end{eqnarray*}
Adding these inequalities for $d = c2^{-k}$ where $k= 0, 1, 2,...,$  
we obtain 
\begin{equation}
\label{W2pnear}
\begin{split}
\sum_{i=1}^{\infty}\int_{S_{u}(y_{i}, \bh(y_i)/2)} |D^2 u|^{p}\, dx &= \sum_{k=0}^{\infty}\sum_{i\in \mathcal{F}_{c2^{-k}}} \int_{S_{u}(y_{i}, \bh(y_i)/2)} |D^{2} u|^{p}\,dx
\\&\leq \sum_{k=0}^{\infty} C(p) (c2^{-k})^{ \frac{1-p\alpha}{2-\alpha}}\\ &\leq C(n, \alpha, p,\mu, g, \Omega),
\end{split}
\end{equation}
if \[p<1/\alpha.\]
Combining \eqref{W2pfar}, \eqref{Lphh}, and \eqref{W2pnear}, we obtain 
the desired global $L^p$ estimate for $D^2 u$.
\end{proof}

\begin{rem} 
\label{w2pg_rem}
Assume that $u$ and $\Omega$ satisfy the hypotheses of  Theorem \ref{thmw}.
Given $0<p<1/\alpha$, we can show that for all $\gamma\in [0, 1-p\alpha)$,
\[\int_\Omega \dist^{-\gamma}(\cdot,\p\Omega)\|D^2 u\|^p\,dx\leq C(n, \alpha, p, \gamma, \mu, g, \Omega).\]
Indeed, if  $x\in S_{u}(y_{i}, \bh(y_i)/2)$ where $i\in \mathcal{F}_{c2^{-k}}$, then Proposition \ref{tan_sec0} gives
\[\dist (x,\p\Omega)\geq c (c2^{-k})^{\frac{1}{2-\alpha}}.\]
Therefore, for $\gamma\in (0, 1-p\alpha)$, by revisiting \eqref{W2pnear}, we find
\begin{equation*}
\begin{split}
\sum_{i=1}^{\infty}\int_{S_{u}(y_{i}, \bh(y_i)/2)} \dist^{-\gamma}(\cdot,\p\Omega) |D^2 u|^{p}\, dx &= \sum_{k=0}^{\infty}\sum_{i\in \mathcal{F}_{c2^{-k}}} \int_{S_{u}(y_{i}, \bh(y_i)/2)} \dist^{-\gamma}(\cdot,\p\Omega) |D^{2} u|^{p}\,dx
\\&\leq \sum_{k=0}^{\infty} C(p) (c2^{-k})^{\frac{-\gamma}{2-\alpha}} (c2^{-k})^{ \frac{1-p\alpha}{2-\alpha}}\\ &\leq C(n, \alpha, p,\gamma, \mu, g, \Omega).
\end{split}
\end{equation*} 
\end{rem}
Consequently, we obtain the following result from Theorem \ref{mainthm1} and Remark \ref{w2pg_rem}.
\begin{cor}
Let $u\in C(\overline{\Omega})$ be the convex solution to \eqref{MA1} where  $\Omega\subset\R^n$ is a uniformly convex domain with $C^3$ boundary.
Given $0<p<1/\alpha$, we have for all $\gamma\in [0, 1-p\alpha)$,
\[\int_\Omega \dist^{-\gamma}(\cdot,\p\Omega)\|D^2 u\|^p\,dx\leq C(n, \alpha, p, \gamma, \Omega).\]
\end{cor}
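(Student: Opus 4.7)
My plan is to deduce the corollary as a direct application of Remark \ref{w2pg_rem} once the hypotheses of Theorem \ref{thmw} are verified for the unique convex Aleksandrov solution $u$ to \eqref{MA1}. I rewrite the equation in the form required by Theorem \ref{thmw} by setting
\[g(x) := \Bigl(\frac{\dist(x,\p\Omega)}{|u(x)|}\Bigr)^{\alpha}, \qquad \det D^2 u = g\,[\dist(\cdot,\p\Omega)]^{-\alpha}.\]
The two-sided bound $\lambda \le g \le \Lambda$ is a restatement of \eqref{singbdr1}, which was already recorded in the excerpt following Mohammed's existence result. The quadratic separation of $u$ from its tangent hyperplanes on $\p\Omega$ holds under scenario~(1) of the remark following Theorem \ref{SZthm}, since $u|_{\p\Omega}\equiv 0$ and $\Omega$ is uniformly convex.

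The only hypothesis of Theorem \ref{thmw} that requires new work is $g \in C(\overline{\Omega})$. Interior continuity is immediate, so only boundary continuity matters. By Theorem \ref{mainthm1}(i), $u \in C^{1,\beta}(\overline{\Omega})$. Since $u|_{\p\Omega} \equiv 0$ and $\p\Omega \in C^3$, the tangential derivatives of $u$ vanish on $\p\Omega$, so at each $x_0 \in \p\Omega$ one has $Du(x_0) = \p_\nu u(x_0)\,\nu_{x_0}$. The $C^{1,\beta}$ expansion at a boundary point $x_0$ then reads
\[|u(x)| = -\p_\nu u(x_0)\,(x-x_0)\cdot \nu_{x_0} + O(|x-x_0|^{1+\beta}),\]
while the $C^3$ regularity of $\p\Omega$ gives $\dist(x,\p\Omega) = -(x-x_0)\cdot \nu_{x_0} + O(|x-x_0|^2)$ for $x$ approaching $x_0$ along the inner normal. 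Consequently $|u(x)|/\dist(x,\p\Omega) \to \p_\nu u(x_0)$ uniformly as $x\to \p\Omega$, and $g$ admits a continuous extension to $\overline{\Omega}$.

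To guarantee the bounds $\lambda\le g\le \Lambda$ remain valid after this extension, I need a uniform lower bound $\p_\nu u(x_0) \ge c_0 > 0$ on $\p\Omega$. This is the main (and essentially only) nontrivial point in the plan. It can be read off from Proposition~\ref{tan_sec0}: pick any $x_0 \in \p\Omega$ and a sequence of interior points $y_k \to x_0$ whose maximal interior sections shrink to $\{x_0\}$; then by that proposition $Du(y_k) - Du(x_0) = a_k e_n$ with $a_k \ge \kappa_0 \bar h(y_k)^{(1-\alpha)/(2-\alpha)}$ and $\dist(y_k,\p\Omega) \asymp \bar h(y_k)^{1/(2-\alpha)}$. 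The $C^{1,\beta}$ continuity of $Du$ up to the boundary and the relation $a_k/\dist(y_k,\p\Omega)^{1-\alpha} \ge c$ are then compatible only if $\p_\nu u(x_0) \ge c_0 > 0$ (otherwise one contradicts $a_k \to 0$ too fast). Equivalently, the lower bound $\det D^2 u \ge \lambda\dist^{-\alpha}$ together with the Aleksandrov maximum principle in sections precludes $\p_\nu u(x_0) = 0$.

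Once $g \in C(\overline{\Omega})$ and $\lambda \le g \le \Lambda$ are in place, Remark \ref{w2pg_rem} applies verbatim and furnishes
\[\int_\Omega \dist^{-\gamma}(\cdot,\p\Omega)\,\|D^2 u\|^p \,dx \le C(n,\alpha,p,\gamma,\Omega)\]
for every $\gamma \in [0,1-p\alpha)$, completing the proof. The only real obstacle, as noted, is the uniform positivity of the normal derivative on the boundary; everything else is bookkeeping that identifies this corollary with the framework of Theorem~\ref{thmw} and its weighted refinement.
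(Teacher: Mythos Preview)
Your approach is correct and matches the paper's, which simply states that the corollary follows from Theorem~\ref{mainthm1} and Remark~\ref{w2pg_rem}; you have correctly identified and filled in the implicit step of verifying $g=(\dist/|u|)^\alpha\in C(\overline{\Omega})$ via the $C^{1,\beta}$ regularity of $u$. One simplification: your detour through Proposition~\ref{tan_sec0} for the lower bound $\partial_\nu u(x_0)\ge c_0>0$ does not quite work as written (the bound on $a_k$ controls $Du(y_k)-Du(x_0)$, not $Du(x_0)$ itself), and it is in any case unnecessary---the inequality $g\le\Lambda$ you already invoke from \eqref{singbdr1} is equivalent to $|u|/\dist\ge \Lambda^{-1/\alpha}$, so passing to the limit along the inner normal at $x_0$ gives $\partial_\nu u(x_0)\ge \Lambda^{-1/\alpha}>0$ directly.
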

\begin{rem} The range of $p$ in Theorem \ref{mainthm1}(ii) is sharp. Consider for example \[\Omega= B_1(0)\subset\R^n.\] Then the solution $u$ to \eqref{MA1} is radial. Thus $u(x) = v(|x|)$ where $v:[0, 1]\rightarrow (-\infty, 0]$ is of class $C^{1,\beta}$ with $\beta =\beta (n,\alpha)>0$ and 
\begin{equation}
\label{MAr}
 \left\{
 \begin{alignedat}{2}
   v'' (r) \Big( \frac{v'(r)}{r}\Big)^{n-1}~&= |v(r)|^{-\alpha} \h~&&\text{in} ~[0, 1), \\\
v(1) &=0,\\\ v'(0)&=0.
 \end{alignedat}
 \right.
\end{equation}
Moreover, there exist positive constant $\lambda$ and $\Lambda$, depending only on $n$ and $\alpha$, such that 
\[\lambda (1-r)\leq |v(r)|\leq \Lambda (1-r)\quad\text{in } [0, 1] \quad\text{and } \lambda \leq v'(1)\leq \Lambda.\]
It follows from \eqref{MAr} and the global $C^{1,\beta}$ regularity of $v$ that 
\[v''(r) \geq c(n,\alpha) (1-r)^{-\alpha}\quad\text{for all } r\in [1/2, 1]\quad \text{where } c(n,\alpha)>0.\]
This implies that $v''\not\in L^{\frac{1}{\alpha}}\big([1/2, 1]\big)$. Since 
\[\|D^2 u\| \geq \frac{1}{n}\Delta u = \frac{1}{n} \big (v'' + \frac{n-1}{r} v'\big)\geq \frac{v''}{n},\]
we find that $D^2 u \not\in L^{\frac{1}{\alpha}} (B_1(0))$.
\end{rem}
\medskip
{\bf Acknowledgements.} The authors would like to thank the referee for carefully reading the paper and providing constructive comments.

\end{document}